\def\argmin{\text{argmin}}
\def\F{\mathcal{F}}
\newcommand\R{\mathbb{R}}
\newcommand\Z{\mathbb{Z}}
\newcommand\N{\mathbb{N}}
\def\argmin{\text{argmin}}
\newtheorem{theo}{Theorem}
\newtheorem{lem}{Lemma}
\newtheorem{cor}{Corollary}
\newtheorem{pr}{Proposition}
\newtheorem{df}{Definition}
\newtheorem{rem}{Remark}
\newcommand{\fim }{{\qed}}
\newcommand{\uf}{{\mathcal{U}}}
\newcommand{\ttau}{{\tilde{\tau}}}
\newcommand{\udd}{{\mathcal{U}^\delta}}
\newcommand{\tuf}{{\widetilde{\mathcal{U}}}}
\newcommand{\ta}{{\tilde{a}}}
\newcommand{\taf}{{\widetilde{\mathcal{A}}}}
\newcommand{\oaf}{{\overline{\mathcal{A}}}}
\newcommand{\af}{{\mathcal{A}}}
\newcommand{\Q}{{\mathfrak{Q}}}
\newcommand\X{H^l(0,S,H^{1+\varepsilon}(D))}
\newcommand{\lb}{{L^2(0,S,H^{1+\varepsilon}(D))}}
\newcommand\he{H^{1+\varepsilon}(D)}
\newcommand\Y{L^2(0,S,W^{1,2}_2(D))}
\newcommand\Ya{L^2(0,S,L^2(D))}
\newcommand\ya{W^{1,2}_2(D)}
\begin{document}

\title{\bf Online Local Volatility Calibration by Convex Regularization}

\author{Vinicius V.L. Albani\thanks{IMPA, Estr. D. Castorina
         110, 22460-320 Rio de Janeiro, Brazil, \href{mailto:vvla@impa.br}{\tt
         vvla@impa.br}} \, and \,
	Jorge P. Zubelli\thanks{IMPA, Estr. D. Castorina
         110, 22460-320 Rio de Janeiro, Brazil, \href{mailto:zubelli@impa.br}{\tt
         zubelli@impa.br}}
        }
\date{\today}
\maketitle
\begin{abstract}

We address the inverse problem of local volatility surface calibration from market given option prices. We integrate the ever-increasing flow of option price information into the well-accepted local volatility model of Dupire. This leads to considering both the local volatility surfaces and their corresponding prices as indexed by the observed underlying stock price as time goes by in appropriate function spaces. The resulting parameter to data map is defined in appropriate Bochner-Sobolev spaces. Under this framework, we prove key regularity properties. This enable us to build a calibration technique that combines online methods with convex Tikhonov regularization tools. Such procedure is used to solve the inverse problem of local volatility identification. As a result, we prove convergence rates with respect to noise and a corresponding discrepancy-based choice for the regularization parameter. We conclude by illustrating the theoretical results by means of numerical tests.
\end{abstract}
\noindent {\bf Keywords:} Local Volatility Calibration, Convex Regularization, Online Estimation, Morozov's Principle, Convergence Rates.
\section{Introduction}\label{sec:intro}
A number of interesting problems in nonlinear analysis are motivated by questions from mathematical finance.
Among those problems, the robust identification of the variable diffusion coefficient that appears in Dupire's local volatility model~\cite{dupire,volguide} presents substantial difficulties for its nonlinearity and ill-posedness. In previous works tools from Convex Analysis and Inverse Problem theory have been used to address this problem. See \cite{acpaper} and references therein. 

In this work, we incorporate the fact that as time evolves more data is available for the identification of Dupire's volatility surface. Thus we develop an {\em online} approach to the ill-posed problem of the local volatility surface calibration. Such surface is characterized by a non-negative two-variable function  $\sigma = \sigma(\tau,K)$  of the time to expiration $\tau$ and the strike price $K$. 

In what follows, we consider that the local volatility surfaces are indexed by the observed underlying asset price $S_0$.
The reason for that stems from the fact that if we try to use information of prices observed on different dates, there is no financial or economical reason for the volatility surface to stay exactly the same. Thus, in principle we may have different volatility surfaces, although such change may be small. 

Let us quickly review the standard Black-Scholes setting and Dupire's local volatility model. Recall that an option or derivative is a contract whose value depends on the value of an underlying stock or index. Perhaps the most well known derivative is an European call option, where the holder has the right (but not the obligation) to buy the underlying at time $t = T$ for a strike value $K$. We shall denote the stochastic process defining such underlying $S(t) = S(t,\omega)$, where as usual we assume that it is an adapted stochastic process on a suitable filtered probability space $(\Omega,\mathscr{U},\mathbb{F},\widetilde{\mathbb{P}})$, where $\mathbb{F} = \{\mathbb{F}_t\}_{t \in \R}$ is a filtration \cite{korn}.

It is well known \cite{dupire,volguide,korn} that, by setting the current time as $t=0$, the value $C$ of an European call option with strike $K$ and expiration $T = \tau$ satisfies:

\begin{equation}
\left\{
\begin{array}{rcll}
-\displaystyle\frac{\partial C}{\partial \tau} + 
\frac{1}{2}\sigma^2(\tau,K)K^2\frac{\partial^2 C}{\partial K^2} - 
bK\frac{\partial C}{\partial K} &=& 0 & \tau > 0, ~K \geq 0\\

C(\tau = 0,K) &=& (S_0 - K)^+, & \text{for}~ K>0,\\
\displaystyle\lim_{K\rightarrow +\infty}C(\tau,K) & = & 0,&\text{for }~ \tau > 0,\\
\displaystyle\lim_{K\rightarrow 0^+}C(\tau,K) & = & S_0,&\text{for }~ \tau > 0
\end{array}
\right.
\label{dup1}
\end{equation}
where $b$ is the difference between the continuously compounded interest and dividend rates of the underlying asset. In what follows, we assume that such quantities are constant.
Defining the diffusion parameter $a(\tau,K) = \sigma(\tau,K)^2/2$, Problem (\ref{dup1}) leads to the following  parameter to solution map:
$$
\begin{array}{rcl}
 F : D(F) \subset X &\longrightarrow & Y\\
  a \in D(F) & \longmapsto & F(a) = C \in Y
\end{array}
$$
where $X$ and $Y$ are Hilbert spaces to be properly defined below. $D(F)$ is the domain of the parameter to solution map (not necessarily dense in $X$) and $C = C(a,\tau,K)$ is the solution of Problem (\ref{dup1}) with diffusion parameter $a$.

The inverse problem of local volatility calibration, as it was tackled in previous works \cite{crepey,acthesis,acpaper,eggeng}, consists in given option prices $C$, find an element $\ta$ of $D(F)$ such that $F(\ta) = C$ in the least-square sense below. Indeed, the operator $F$ is compact and weakly closed. Thus, this inverse problem is ill-posed. In \cite{crepey,acthesis,acpaper,eggeng} different aspects of the Tikhonov regularization were analyzed. In our case, it is characterized by the following: Find an element of
$$
\argmin \left\{\|F(a) - C\|^2_Y + \alpha f_{a_0}(a)  \right\} ~~\text{subject to }~ a \in D(F) \subset X,
$$
where $f_{a_0}$ is a weak lower semi-continuous convex coercive functional. The analysis presented in \cite{crepey,acthesis,acpaper,eggeng} was based on an {\em a priori} choice of the regularization parameter with convex regularization tools.

In contrast, in the present work we explore the dependence of the local volatility surface on the observed asset price in order to incorporate different option price surfaces in the same procedure of Tikhonov regularization. More precisely, we consider the map 
$$
\begin{array}{rcl}
\uf: D(\uf) \subset \mathcal{X} & \longmapsto & \mathcal{Y}\\
\af \in D(\uf) & \longmapsto & \uf(\af): S\in [S_{\min},S_{\max}] \mapsto C(S,a(S))
\end{array}
$$ 
where $C(S,\af(S))$ is the solution of (\ref{dup1}) with $S_0 = S$ and $\sigma^2/2 = a(S)$. Moreover, $\af$ maps $S \in [S_{\min},S_{\max}]$ to $a(S) \in D(F)$ in a well-behaved way.

In this context the inverse problem becomes the following: Given a family of option prices $\mathcal{C} \in \mathcal{Y}$, find $\taf \in D(\uf)$ such that $\uf(\taf) = \mathcal{C}$. We shall see that the operator $\uf$ is also compact and weakly closed. Thus, this problem is also ill-posed. The corresponding regularized problem is defined by the following:

Find an element of
$$
\argmin\left\{ \displaystyle\int_{S_{\min}}^{S_{\max}}\|F(a(S)) - C(S)\|^2_YdS + \alpha f_{\af_0}(\af) \right\} ~~\text{ subject to }~ \af \in D(\uf).
$$

The main contributions of the current work are the following:

Firstly, we extend the local volatility calibration problem to local volatility families. This new setting allows incorporating more data into the calibration problem, leading to an online Tikhonov regularization. We prove that the so-called direct problem is well-posed, i.e., the forward operator satisfies key regularity properties. 
This framework generalizes in a nontrivial way the structure used in previous works \cite{crepey,acthesis,acpaper,eggeng} since it requires the introduction of more tools, in particular that of Bochner spaces.

Secondly, in this setting, we develop a convergence analysis in a general context, based on convex regularization tools. See \cite{schervar}. 

Thirdly, we establish a relaxed version of Morozov's discrepancy principle with convergence rates. This allows us to find the regularization parameter appropriately for the present problem. See \cite{anram,moro}.

The article is divided as follows:

\noindent In Section~2, we present the setting of the direct problem. In Section~3, we define properly the forward operator and prove some key regularity properties that are important in the analysis of the inverse problem. This is done in Theorem~\ref{prop22} and Propositions~\ref{prop4}, \ref{prop6}, \ref{prop7} and \ref{prf1}. In Section~4, we tie up the inverse problem with convex Tikhonov regularization under an {\em a priori} choice of the regularization parameter. The convergence of the regularized solutions to the true one, with respect to $\delta\rightarrow 0$, is stated in Theorem~\ref{tc1}. In Section~5 we establish the Morozov discrepancy principle for the present problem with convergence rates. This is done in Theorems~\ref{tma} and \ref{mor:cr}. Illustrative numerical tests are presented in Section~6.
\section{Preliminaries}\label{sec:preliminar}\label{sec:dupsurv}
We start by setting the so-called direct problem. It is based on the pricing of European call options by a generalization of Black-Scholes-Merton model. 

Performing the change 
of variables $y := \text{log}(K/S_0)$ and $\tau : = T$ on the Cauchy problem (\ref{dup1}) and defining 
$u(S_0,\tau,y) : = C(S_0,\tau,S_0\text{e}^y)$ and $a(S_0,\tau,y) := 
\frac{1}{2}\sigma^2(S_0,\tau,S_0\text{e}^y)$, 
it follows that $u(S_0,\tau,y)$ satisfies
\begin{equation}
\left\{
\begin{array}{rcll}
-\displaystyle\frac{\partial u}{\partial \tau} + a(S_0,\tau,y)\left(\frac{\partial^2 u}{\partial y^2}
 - \frac{\partial u}{\partial y}\right) 
+ b\frac{\partial u}{\partial y} &=& 0 & \tau > 0, ~y \in \R\\
u(\tau = 0,y) &=& S_0(1 - \text{e}^y)^+, &\text{for }~ y \in \R,\\
\displaystyle\lim_{y\rightarrow +\infty}u(\tau,y) & = & 0,&\text{for }~ \tau > 0,\\
\displaystyle\lim_{y\rightarrow -\infty}u(\tau,y) & = & S_0,&\text{for }~ \tau > 0.
\end{array}\right.
\label{dup2}
\end{equation}
Note that, $\sigma$ and $a$ are assumed strictly positive and are related by a smooth bijection (since $\sigma>0$). 
Thus, in what follows we shall work only with the local variance $a$ instead of 
volatility $\sigma$. This simplifies the analysis that follows.

Denote by $D:=(0,T)\times \R$ the set where problem \eqref{dup2} is 
defined. From \cite{eggeng} we know that \eqref{dup2} has a 
unique solution in $W^{1,2}_{2,loc}(D)$, the space of functions 
$u : (\tau,y) \in D \mapsto u(\tau,y) \in \mathbb{R}$ such that, it has locally 
squared integrable weak derivatives up to order one in $\tau$ and up to order two 
in $y$.

We now define the set where the diffusion parameter $a$ lives. For fixed $\varepsilon > 0$, take 
scalar constants $a_1,a_2 \in \mathbb{R}$ such that $0 < a_1 \leq a_2 < +\infty$ 
and a fixed function $a_0 \in \he$, with $a_0 < a < a_1$. Define
\begin{equation}
Q:=  \{a \in a_0 + \he : a_1\leq a \leq a_2\}
\label{domopdi}
\end{equation}
Note that $Q$ is weakly closed and has nonempty interior under the standard topology of $\he$. See the first two chapters of \cite{acthesis,acpaper} and references therein.
\section{The Forward Operator}\label{sec:forward}
Since we assume that the local variance surface is dependent on the current price, we have to introduce proper spaces for the analysis of the problem. As it turns out, we have to make use of Bochner integral techniques. See \cite{evanspde,reedsimon1,yosida}. The main reference for this section is \cite{haschele}.

We start with some definitions. Given a time interval, say $[0,\overline{T}]$, the realized prices $S(t)$ vary within $[S_{\min},S_{\max}]$. After reordering $S(t)$ in ascending order, we perform the change of variables $s = S(t)-S_{\min}$, denote $S = S_{\max}-S_{\min}$. Thus $s \in [0,S]$. Hence, for each $s$, we denote $a(s) := a(s,\tau,y)$ the local variance surface correspondent to $s$.

\begin{df}
Given $\af \in \lb$, with $\af : s\mapsto a(s)$ (see \cite{yosida}), we define its Fourier series $\hat{\af} = \{\hat{a}(k)\}_{k \in \Z}$ by
$$
\hat{a}(k) := \displaystyle\frac{1}{2S}\int^S_0 a(s)\exp(-iks\pi/S)ds + 
\displaystyle\frac{1}{2S}\int^0_{-S} a(-s)\exp(-iks\pi/S)ds.
$$
\end{df}
\noindent It is well defined, since $\{s \mapsto a(s)\exp(-iks2\pi/S)\}$ is weakly measurable and $\lb \subset L^1(0,S,\he)$ by the Cauchy-Schwartz inequality.

We now define a class of Bochner-type Sobolev spaces:
\begin{df}
Let $\X$ be the space of $\af \in \lb$, such that
$$
\|\af\|_l := \displaystyle\sum_{k \in \Z} (1+|k|^l)^2\|\hat{a}(k)\|^2_{\he_\mathbb{C}} < \infty,
$$
where $\he_\mathbb{C} = \he \oplus  i\he$ is the complexification of $\he$. Moreover, $\X$ is a Hilbert space with the inner product
$$
\langle \af,\taf\rangle_l := \displaystyle\sum_{k\in\Z}(1+|k|^l)^2\langle a(k),\ta(k)\rangle_{\he_\mathbb{C}}.
$$
\end{df}
\begin{pr}{\cite[Lemma~3.2]{haschele}}
For $l > 1/2$, each $\af \in \X$ has a continuous representative and the map $i_l : \X \hookrightarrow C(0,S,\he)$ is continuous (bounded).
Moreover, we have the estimate
\begin{equation}
 \displaystyle\sup_{s\in[0,S]}\|u(s)\|_{\he} \leq \|\uf\|_{l}\left(2\sum_{k = 0}^{\infty}\frac{1}{(1+k^l)^2}\right)^{1/2}.
\label{estimate1}
\end{equation}
Defining the application $\langle \af , x\rangle_{\he} : = \{s\mapsto \langle a(s), x \rangle\}$ for each $x$ in $\he$ and $\af$ in $\X$, 
it follows that $\langle \af , x\rangle_{\he}$ is an element of $H^l[0,S]$ and the inequality 
$\|\langle \af , x\rangle_{\he}\|_{ H^l[0,S]} \leq \|\af\|_l\|x\|_{\he}$ holds. Moreover, for every $\af,\mathcal{B} \in \lb$, we have the identity
$$
\langle \af, \mathcal{B} \rangle_\lb = \sum_{k \in \Z}\langle \hat{a}(k),\hat{b}(k)\rangle_{\he_\mathbb{C}}.
$$
\label{p1}
\end{pr}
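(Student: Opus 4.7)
The plan is to exploit the Fourier series representation $\af(s)\sim\sum_{k\in\Z}\hat{a}(k)\exp(iks\pi/S)$ arising from the even extension of $\af$ to $[-S,S]$ implicit in the definition of $\hat{a}(k)$. Each of the three claims then reduces, after an appropriate application of Cauchy--Schwarz, to a scalar Fourier-analytic statement on $[-S,S]$.

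For the embedding $i_l:\X\hookrightarrow C(0,S,\he)$ and the sup estimate \eqref{estimate1}, I would first observe that the partial sums $S_N(s):=\sum_{|k|\le N}\hat{a}(k)\exp(iks\pi/S)$ are continuous $\he_\mathbb{C}$-valued functions, being finite linear combinations of scalar exponentials with $\he_\mathbb{C}$ coefficients. Splitting $\|\hat{a}(k)\|_{\he_\mathbb{C}}=(1+|k|^l)^{-1}\cdot(1+|k|^l)\|\hat{a}(k)\|_{\he_\mathbb{C}}$ and applying Cauchy--Schwarz produces both uniform convergence of $\{S_N\}$ (since $\sum_{k\in\Z}(1+|k|^l)^{-2}<\infty$ when $l>1/2$) and the estimate \eqref{estimate1}, via the bound $\sum_{k\in\Z}(1+|k|^l)^{-2}\le 2\sum_{k\ge 0}(1+k^l)^{-2}$. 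The uniform limit is continuous and coincides with $\af$ as an element of $\lb$, so it is the sought continuous representative.

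For $f(s):=\langle a(s),x\rangle_{\he}$ with $x\in\he$, I would extract the scalar Fourier coefficients of $f$ by pulling the continuous linear functional $\langle\cdot,x\rangle$ through the Bochner integral that defines $\hat{a}(k)$, giving $\hat{f}(k)=\langle\hat{a}(k),x\rangle_{\he_\mathbb{C}}$. Cauchy--Schwarz then yields $|\hat{f}(k)|\le\|\hat{a}(k)\|_{\he_\mathbb{C}}\|x\|_{\he}$, and summing against the weights $(1+|k|^l)^2$ delivers $\|f\|_{H^l[0,S]}\le\|\af\|_l\|x\|_{\he}$ directly from the definitions of the two norms.

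For the Parseval-type identity I would proceed by density: trigonometric polynomials $\sum_{|k|\le N}c_k\exp(iks\pi/S)$ with $c_k\in\he$ are dense in $\lb$, and on this subspace the identity follows from orthogonality of the exponentials on $[-S,S]$ combined with sesquilinearity of the $\he_\mathbb{C}$-inner product. Both sides are continuous in $\af,\mathcal{B}$ for the $\lb$-topology (the right-hand side by the scalar Parseval applied to the even extensions), and a limit argument concludes. The main obstacle, I expect, is purely bookkeeping: tracking the $1/(2S)$ normalization coming from the even extension so that no spurious factor of $S$ appears on either side of the three formulas.
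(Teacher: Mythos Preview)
The paper does not supply its own proof of this proposition: it is stated with an explicit attribution to \cite[Lemma~3.2]{haschele} and no argument is given in the text. Your sketch is the standard Fourier-analytic proof one would expect for a vector-valued Sobolev embedding of this type, and each of the three steps (Cauchy--Schwarz on the weighted coefficients for the sup estimate, pulling the functional $\langle\cdot,x\rangle$ through the Bochner integral for the scalar projection, and a density/orthogonality argument for the Parseval identity) is correct in outline. Since there is nothing in the paper to compare against beyond the citation, there is no methodological divergence to report; your bookkeeping concern about the $1/(2S)$ normalization from the even extension is the only place where care is genuinely needed, but it does not affect the structure of the argument.
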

\begin{lem}
Assume that $l > 1/2$. If the sequence $\{\af_n\}_{n\in\N}$ converges weakly to $\taf$ in $\X$, then, the sequence $\{a_k(s)\}_{k\in\N}$ weakly converges to $\ta(s)$ in $\he$ for every $s \in [0,S]$.
\label{lemw}
\end{lem}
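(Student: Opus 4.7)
The plan is to recognize that the statement is simply a consequence of the embedding in Proposition~\ref{p1} together with the general fact that bounded linear operators are weak-to-weak continuous. More precisely, for each fixed $s \in [0,S]$ I would introduce the \emph{evaluation map}
\[
  E_s : \X \longrightarrow \he, \qquad E_s(\af) := a(s),
\]
using the continuous representative guaranteed by Proposition~\ref{p1}. Clearly $E_s$ is linear, and the estimate \eqref{estimate1} yields
\[
  \|E_s(\af)\|_{\he} \;\leq\; \sup_{s\in[0,S]} \|a(s)\|_{\he} \;\leq\; C_l\,\|\af\|_l,
\]
where $C_l := \bigl(2\sum_{k\geq 0}(1+k^l)^{-2}\bigr)^{1/2}$ is finite because $l>1/2$. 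Hence $E_s$ is a bounded linear operator from $\X$ to $\he$.

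Now I would invoke the elementary fact that any bounded linear operator between Hilbert (or, more generally, Banach) spaces is continuous from the weak topology of the domain to the weak topology of the codomain. Indeed, for an arbitrary $\phi \in (\he)^*$ the composition $\phi \circ E_s$ belongs to $\X^*$, so the assumption $\af_n \rightharpoonup \taf$ in $\X$ gives
\[
  \phi\bigl(a_n(s)\bigr) = (\phi \circ E_s)(\af_n) \;\longrightarrow\; (\phi \circ E_s)(\taf) = \phi\bigl(\ta(s)\bigr).
\]
Since $\phi$ is arbitrary, $a_n(s) \rightharpoonup \ta(s)$ in $\he$, which is exactly the conclusion.

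There is essentially no obstacle: the entire content sits in recognizing the evaluation at $s$ as a \emph{bounded linear} functional (well-definedness of the evaluation and boundedness are both handed to us by Proposition~\ref{p1}). The only minor point to state explicitly is that the identification of $\af$ with its continuous representative is unambiguous, so that $a_n(s)$ and $\ta(s)$ are well-defined elements of $\he$ for every $s\in[0,S]$, not merely almost every $s$.
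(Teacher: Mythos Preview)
Your argument is correct and considerably more economical than the paper's. You observe that the point evaluation $E_s:\X\to\he$ is bounded linear (by Proposition~\ref{p1}) and then invoke the general fact that bounded linear maps are weak-to-weak continuous. This proves precisely the statement of the lemma.

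The paper takes a different, more hands-on route. Rather than abstracting through the evaluation operator, it works directly with weak neighborhoods: given finitely many test vectors $\alpha_1,\dots,\alpha_K\in\he$ and $\epsilon>0$, it uses the second part of Proposition~\ref{p1} (the scalar functions $s\mapsto\langle a(s),\alpha_k\rangle_{\he}$ lie in $H^l[0,S]$) together with the \emph{compact} embedding $H^l[0,S]\hookrightarrow C([0,S])$ to manufacture tensor-product functionals $\mathcal{C}_{k,m}=\alpha_k\otimes f_{k,m}\in\X^*$. These define a weak zero neighborhood in $\X$ whose preimage under evaluation controls the given weak neighborhood in $\he$ \emph{simultaneously for all} $s\in[0,S]$. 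In other words, the paper actually establishes a slightly stronger conclusion than the lemma states: the weak convergence $a_n(s)\rightharpoonup \ta(s)$ is \emph{uniform} in $s$ (one $n_0$ works for every $s$). This uniformity, and the explicit construction that yields it, is recycled verbatim in the compactness proof of Theorem~\ref{prop22}. Your cleaner argument gives only pointwise weak convergence --- which is all the lemma claims and all that Proposition~\ref{p2} needs --- but would not by itself furnish the machinery the paper later reuses.
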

\begin{proof}
Take a $\{\af_n\}_{n\in\N}$ and $\taf$ as above. We want to show that, given a weak zero neighborhood $U$ of $\he$, then for a sufficiently large $n$, $a_n(s) - a(s) \in U$ for every $s \in [0,S]$. A weak zero neighborhood $U$ of $\he$ is defined by a set of $\alpha_1,...,\alpha_K \in \he$ and an $\epsilon > 0$ such that $g \in \he$ is an element of $U$ if $\max_{k = 1,...,K}|\langle g , \alpha_n\rangle| < \epsilon$. 

Since the immersion $H^l[0,S] \hookrightarrow C([0,S])$ is compact and $H^l[0,S]$ is reflexive, it follows that each weak zero neighborhood of $H^l[0,S]$ is a zero neighborhood of $C([0,S])$. Furthermore, from Proposition \ref{p1} we know that 
$\langle \af, \alpha \rangle_{\he} \in H^l[0,S]$ 
with its norm bounded by 
$\|\af\|_l\|\alpha\|_{\he}$, for every $n \in \N$ and $\alpha \in \he$. 
Thus, we take the smallest closed ball centered at zero, $B$, which contains 
$\langle \taf,\alpha_k\rangle_{\he}$
  with $k = 1,...,K$ and every $\langle \af_n,\alpha_k\rangle_{\he}$ with $n\in \N$ and $k = 1,...,K$. 
Therefore, choosing $\epsilon > 0$ as above, it is true that for each $k = 1,...,K$, there are 
$f_{k,1}, ...,f_{k,M(k)} \in H^l[0,S]$ 
and 
$\eta_k > 0$, such that $\|f\|_{C([0,S])} < \epsilon$ 
for every $f \in B$ with $\max_{m = 1,...,M(k)}|\langle f,f_{k,m}\rangle|<\eta_k$.
Hence, we define $\mathcal{C}_{k,m} := \alpha_k \otimes f_{k,m} \in \X^*$ and the weak zero neighborhood $A = \cap^K_{k = 1}A_k$ of $\X$ with
$$
A_k := \{\af \in \X ~: ~|\langle \af, \mathcal{C}_{k,m}\rangle|\leq \eta_k, ~m=1,...,M(k) \}.
$$
As $A$ is a weak zero neighborhood of $\X$, it is true that for sufficiently large $n$, 
$\af_n - \taf \in A$, which implies that $a_n(s) - \ta(s) \in U$ for every $s \in [0,S]$, i.e., $\{a_n(s)\}_{n\in \N}$ 
weakly converges to $\ta(s)$ for every $s \in [0,S]$.
\end{proof}

Define the set 
$\Q := \{ \af \in \X : a(s) \in Q ,~\forall s \in [0,S]\}$, 
i.e., each $\af$ in $\Q$ is the map
$\af :  s \in [0,S] \mapsto a(s) \in Q$. Note that $\Q$ is the space of $Q$-valued paths, with $Q$ defined in (\ref{domopdi}).
\begin{pr}
For $l > 1/2$, the set $\Q$ is weakly closed and its interior is nonempty in $\X$.
\label{p2}
\end{pr}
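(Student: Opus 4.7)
The plan is to prove weak closedness and nonempty interior separately, each by reducing to the corresponding property of $Q$ in $\he$, which was already noted after the definition (\ref{domopdi}).

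For weak closedness, I would use the fact that $\Q$ is convex (it is the intersection of the affine constraints $a_1 \le a(s)\le a_2$ for each fixed $s$), so by Mazur's theorem weak closedness is equivalent to weak sequential closedness. Given a sequence $\af_n \in \Q$ with $\af_n \rightharpoonup \taf$ in $\X$, Lemma~\ref{lemw} immediately gives $a_n(s) \rightharpoonup \ta(s)$ in $\he$ for every $s\in[0,S]$. Since $Q$ is weakly closed in $\he$ and $a_n(s)\in Q$ for all $n$, we obtain $\ta(s)\in Q$ for every $s$, hence $\taf \in \Q$.

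For nonempty interior, I would pick $a^*\in\mathrm{int}(Q)\subset\he$ (which exists by the remark after (\ref{domopdi})), so that $B_\rho(a^*)\subset Q$ for some $\rho>0$. Consider the constant path $\taf:s\mapsto a^*$. Its Fourier coefficients vanish for $k\ne 0$ and equal $a^*$ for $k=0$, so $\taf\in \X$ with $\|\taf\|_l=\|a^*\|_{\he}$, and clearly $\taf\in\Q$. For any $\af\in\X$, estimate (\ref{estimate1}) from Proposition~\ref{p1} gives
\[
\sup_{s\in[0,S]}\|a(s)-a^*\|_{\he} \;\le\; C_l\,\|\af-\taf\|_l,
\qquad C_l:=\Bigl(2\sum_{k=0}^{\infty}(1+k^l)^{-2}\Bigr)^{1/2},
\]
which is finite precisely because $l>1/2$. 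Therefore, whenever $\|\af-\taf\|_l<\rho/C_l$, we have $a(s)\in B_\rho(a^*)\subset Q$ for every $s$, i.e.\ $\af\in\Q$. Thus the open ball of radius $\rho/C_l$ around $\taf$ in $\X$ is contained in $\Q$.

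I do not expect a serious obstacle here: both parts are essentially transport of the hypotheses on $Q$ through the tools already established (Lemma~\ref{lemw} for the weak-topology transfer, Proposition~\ref{p1} for the uniform sup-bound). The only mildly delicate point is convexity of $\Q$ to validate the Mazur reduction, and verifying that constant paths lie in $\X$; both are straightforward from the definitions.
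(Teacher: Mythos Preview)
Your proposal is correct and follows essentially the same route as the paper: Lemma~\ref{lemw} plus weak closedness of $Q$ for the first part, and the continuous embedding $\X\hookrightarrow C(0,S,\he)$ from Proposition~\ref{p1} for the second. Your explicit invocation of Mazur's theorem (via convexity of $\Q$) to pass from weak sequential closedness to weak closedness is in fact a detail the paper leaves implicit, and your choice of a constant interior path is a concrete instance of the paper's more loosely described element bounded away from the constraints; neither difference is substantive.
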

\begin{proof} 
By Lemma \ref{lemw} and the fact that $Q$ is weakly closed it follows that $\Q$ is weakly closed. The interior of $\Q$ is nonempty since the inclusion $\X \hookrightarrow C(0,S,\he)$ 
is continuous and bounded. 
Note that, given $\epsilon > 0$, it follows that $\taf = \{s \mapsto \ta(s)\}$ with 
$\underline{a} + \epsilon \leq \ta(s) \leq \overline{a} + \epsilon$ 
for every $s \in [0,S]$ is in the interior of $\Q$. 
\end{proof}

We stress that, in what follows, we always assume that $l>1/2$, since it is enough to state our results concerning regularity aspects of the forward operator.

We define below the forward operator, that associates each family of local variance surfaces to the corresponding family of option price surfaces, determined by the Cauchy problem~\eqref{dup2}. Thus, for a given $a_0 \in Q$ we define:
$$
\begin{array}{rcl}
\mathcal{U}: \Q &\longrightarrow& \Y,\\
\af	     &	\longmapsto       & \uf(\af) :s \in [0,S] \mapsto F(s,a(s)) \in \ya,
\end{array}
$$
where $[\uf(\af)](s) = F(s,a(s)):=u(s,a(s))-u(s,a_0)$ and $u(s,a)$ is the solution of the Cauchy problem~\eqref{dup2} with local variance $a$. The following results state some regularity properties concerning the forward operator. See \cite{acpaper} and references therein.

\begin{pr}
The operator $F:[0,S]\times Q\longrightarrow \ya$ is continuous and compact. Moreover, it is sequentially weakly continuous and weakly closed.\label{prop21}
\end{pr}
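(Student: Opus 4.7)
The plan is to reduce to the known continuity and compactness of the fixed-$s$ parameter-to-solution map $a\mapsto u(s,a)$ established in \cite{crepey,acpaper,eggeng}, and to upgrade those properties to joint ones in $(s,a)$ by exploiting that $s$ enters the Cauchy problem~\eqref{dup2} only through data that are affine in $s$, namely the initial datum $S_0(1-\mathrm{e}^y)^+$ and the left boundary value $S_0$. The coefficients of the PDE and the right-hand side are independent of $s$.

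For joint continuity, given $(s_n,a_n)\to(s,a)$ in $[0,S]\times Q$, I would apply the triangle inequality
$$
\|F(s_n,a_n)-F(s,a)\|_{\ya}\le \|F(s_n,a_n)-F(s,a_n)\|_{\ya}+\|F(s,a_n)-F(s,a)\|_{\ya}.
$$
The second term vanishes in the limit by the known continuity of $a\mapsto F(s,a)$ on $Q$. For the first term, both $u(s_n,a_n)-u(s,a_n)$ and $u(s_n,a_0)-u(s,a_0)$ solve homogeneous linear parabolic equations with data that are affine in $s_n-s$; a parabolic energy estimate exploiting the uniform ellipticity $a_1\le a_n\le a_2$ yields a bound of order $|s_n-s|$ with constants independent of $a_n\in Q$.

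Compactness, sequential weak continuity, and weak closedness then follow from a standard extraction argument. From any sequence in $[0,S]\times Q$ one extracts $s_n\to s^\ast$ by compactness of the interval, and $a_n\rightharpoonup a^\ast\in Q$ by boundedness of $Q$ in $\he$ together with its weak closedness. The known compactness of $a\mapsto F(s^\ast,a)$, which ultimately rests on the compact embedding of $\he$ into $L^2(D)$, provides strong convergence $F(s^\ast,a_n)\to F(s^\ast,a^\ast)$ in $\ya$, and the uniform-in-$a$ continuity estimate from the previous paragraph replaces $s^\ast$ by $s_n$ at no extra cost. Sequential weak continuity reduces to the same two observations, since weak convergence of the scalar factor is automatic and $a_n\rightharpoonup a$ in $\he$ yields strong (hence weak) convergence of $F(s,a_n)$; weak closedness is then immediate.

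The main technical obstacle I foresee is precisely the uniform-in-$a$ $s$-dependence estimate, which is the one ingredient not already contained in the earlier references. It reduces to an energy/Gr\"onwall argument for the linear parabolic equation satisfied by $u(s_1,a)-u(s_2,a)$, whose coefficients lie in $[a_1,a_2]$ and whose initial datum is $(s_1-s_2)(1-\mathrm{e}^y)^+$. Carrying out this estimate in the $\ya$ norm on the unbounded spatial domain $D$ requires a suitable exponential weight or cut-off to control the tails, matching the weighted-Sobolev framework already used for the fixed-$s$ theory in \cite{acpaper,eggeng}.
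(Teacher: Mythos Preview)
The paper does not give an independent proof of this proposition; the sentence immediately preceding it defers entirely to \cite{acpaper} and the references cited there, where the fixed-$s$ properties of $a\mapsto u(s,a)$ are established. Your proposal is correct and supplies exactly the argument one would expect: invoke those fixed-$s$ results and extend to the product $[0,S]\times Q$ by controlling the $s$-dependence separately.

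One simplification is worth pointing out. You flag as the ``main technical obstacle'' the uniform-in-$a$ estimate for $u(s_1,a)-u(s_2,a)$ and propose a weighted energy/Gr\"onwall argument on the unbounded domain. In fact this step is simpler than you anticipate. Since the coefficients of the PDE in \eqref{dup2} do not depend on $S_0$ and the initial and boundary data are \emph{linear} in $S_0$, uniqueness forces the solution itself to be linear in $S_0$: writing $w(a)$ for the solution with $S_0=1$, one has $u(S_0,a)=S_0\,w(a)$ and hence
\[
F(s,a)=(s+S_{\min})\bigl(w(a)-w(a_0)\bigr).
\]
Consequently $\|F(s_1,a)-F(s_2,a)\|_{\ya}=|s_1-s_2|\,\|w(a)-w(a_0)\|_{\ya}$, and the last factor is bounded uniformly on $Q$ by the existing fixed-$s$ theory. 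With this observation, joint continuity, compactness, sequential weak continuity, and weak closedness become immediate corollaries of the corresponding fixed-$s$ statements in \cite{acpaper,eggeng}, with no additional parabolic estimate needed.
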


We define below the concept of Frech\'et equi-differentiability for a family of operators.

\begin{df}
We call a family of operators 
$\{\mathcal{F}_s:Q\longrightarrow\ya \left|~ s \in [0,S] \right.\}$
 Frech\'et equi-differentiable, if for all $\tilde{a} \in Q$ and $\epsilon > 0$, there is a $\delta > 0$, such that
$$
\displaystyle\sup_{s \in [0,S]}\|\mathcal{F}_t(\tilde{a}+h) - \mathcal{F}_s(\tilde{a}) - \mathcal{F}^\prime_s(\tilde{a})h\| \leq \epsilon\|h\|,
$$ 
for $\|h\|_{\he}<\delta$ and $\mathcal{F}^\prime_s(\tilde{a})$ the Frech\'et derivative of $\mathcal{F}_s(\cdot)$ at $\tilde{a}$.
\end{df}

Using this concept, we have the following proposition.
\begin{pr}
The family of operators $\{F(s,\cdot) : Q \longrightarrow \ya \left|~s \in [0,S] \right.\}$ is Frech\'et equi-differentiable.
\label{prop4}
\end{pr}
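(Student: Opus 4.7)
The plan is to reduce the claim to the known (pointwise in $s$) Fréchet differentiability of $F(s,\cdot)$, established in \cite{eggeng,acpaper}, and then to upgrade pointwise differentiability to equi-differentiability through a uniform a priori estimate. Concretely, for each fixed $s$, the derivative $v := F'(s,\tilde{a})h$ is the unique solution of the linearized Cauchy problem
$$
-v_\tau + \tilde{a}(v_{yy}-v_y) + b\,v_y \;=\; -h\bigl(u_{yy}(s,\tilde{a})-u_y(s,\tilde{a})\bigr),\qquad v|_{\tau=0}=0,
$$
together with the natural decay conditions at $y\to\pm\infty$. The remainder $w_s(h):=F(s,\tilde{a}+h)-F(s,\tilde{a})-F'(s,\tilde{a})h$ then satisfies the same linear PDE with right-hand side $-h\bigl[(u_{yy}-u_y)(s,\tilde{a}+h)-(u_{yy}-u_y)(s,\tilde{a})\bigr]$ and zero initial data. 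Standard energy estimates for the Dupire equation, exploiting the uniform ellipticity $a_1\le \tilde{a},\tilde{a}+h\le a_2$ inherent in $Q$, deliver a bound of the form
$$
\|w_s(h)\|_{\ya} \;\le\; K(s,\tilde{a})\,\|h\|_{\he}\,\bigl\|u(s,\tilde{a}+h)-u(s,\tilde{a})\bigr\|_{L^2(0,T;H^1)}.
$$

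Next I would observe that the only ingredient that depends on $s$ is the initial datum $u|_{\tau=0}=(s+S_{\min})(1-\mathrm{e}^{y})^{+}$; the PDE coefficients $b$ and the admissible box $Q$ do not depend on $s$. Since $s_0:=s+S_{\min}$ ranges over the compact interval $[S_{\min},S_{\max}]$ and the map $s\mapsto s_0(1-\mathrm{e}^{y})^{+}$ is Lipschitz into the appropriate (weighted) space in which the Cauchy problem is well-posed, every a priori estimate for $u(s,a)$ and its $y$-derivatives can be taken independent of $s$. Hence $\sup_{s\in[0,S]}K(s,\tilde{a})<\infty$, and the modulus of continuity $h\mapsto \|u(s,\tilde{a}+h)-u(s,\tilde{a})\|$ given by Proposition~\ref{prop21} is also uniform in $s$. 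Combining both, for any $\epsilon>0$ there exists $\delta>0$, independent of $s$, such that $\|w_s(h)\|_{\ya}\le\epsilon\,\|h\|_{\he}$ whenever $\|h\|_{\he}<\delta$, which is exactly the equi-differentiability statement.

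The main obstacle, as I see it, is verifying that the energy estimate for $w_s(h)$ genuinely has an $s$-independent constant. This hinges on two facts: uniform ellipticity of the diffusion term on $Q$ (which is explicit), and continuity of the data-to-solution map $s_0\mapsto u(s_0,a)$ in norms strong enough that $u_{yy}(s,\tilde{a}+h)-u_{yy}(s,\tilde{a})$ can be absorbed into an $\ya$-bound. Once the standard Egger--Engl estimates from \cite{eggeng,acpaper} are recast with this uniform dependence, the passage from pointwise differentiability to equi-differentiability is a compactness argument on the interval $[0,S]$, relying on the continuous dependence of all relevant constants on the initial price $s_0$.
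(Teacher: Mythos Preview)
Your approach is correct and follows essentially the same route as the paper: write the remainder $w_s(h)$ as the solution of a linear parabolic problem with right-hand side $h(v_{yy}-v_y)$, $v=u(s,\tilde a+h)-u(s,\tilde a)$, invoke the Egger--Engl energy estimate $\|w\|_{\ya}\le C\|h\|\,\|v\|_{\ya}$, and close with the (uniform in $s$) continuity of $F$ from Proposition~\ref{prop21}. The paper is slightly more direct than your outline: since $\tilde a\in Q$ is a \emph{fixed} element and both $v$ and $w$ have zero initial data, the PDE for $w$ has coefficients $(\tilde a,b)$ that are manifestly $s$-independent, so $C$ is $s$-independent without any appeal to Lipschitz dependence of the initial datum or a compactness argument on $[0,S]$; those extra ingredients in your sketch are harmless but unnecessary.
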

\begin{proof} 
Given $\ta \in Q$ and $\epsilon > 0$, define $w = F(s,\ta+h) - F(s,\ta) - \partial_a F(s,\ta)h$, it is equivalent to
$w = u(s,\ta+h) - u(s,\ta) - \partial_a u(s,\ta)h$. We denote $v :=  u(s,\ta+h) - u(s,\ta)$. Thus, by linearity $w$ satisfies
$$-w_\tau + \ta(w_{yy} - w_y) + bw_y = h(v_{yy}-v_y),$$
with homogeneous boundary condition. Such problem does not depend on $s$, as $\ta$ is independent of $s$. From the proof of Proposition \ref{prop21} (see also \cite{eggeng}), we have
$
\|w\|_{\ya} \leq C\|h\|_{L^2(D)}\|v\|_{\ya}
$. 
By the continuity of the operator $F$, given $\epsilon > 0 $ we can chose $h \in \he$ with $\|h\|_{\he} \leq \delta$, such that $\|v\|_{\ya} \leq \epsilon /C$ and thus the assertion follows. 
\end{proof}

The following theorem is the principal result of this section, since it states some properties that are at the core of the inverse problems analysis \cite{ern,schervar}. For its proof see Appendix \ref{app:results}.
\begin{theo}
The forward operator $\uf: \Q \longrightarrow \Y$ is well defined, continuous and compact. Moreover, it is sequentially weakly continuous and weakly closed. 
\label{prop22}
\end{theo}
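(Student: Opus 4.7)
The plan is to prove sequential weak continuity first and then derive the other three properties from it. The cornerstone is combining the pointwise-in-$s$ information supplied by Lemma~\ref{lemw} with the scalar forward-operator properties of Proposition~\ref{prop21}, and then using dominated convergence to pass from pointwise convergence in $\ya$ to convergence in $\Y$.

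First I would check well-definedness. For any $\af\in\Q\subset\X$, Proposition~\ref{p1} (applicable since $l>1/2$) gives a continuous representative $s\mapsto a(s)$ from $[0,S]$ into $\he$, whose values lie in $Q$ by definition of $\Q$. Proposition~\ref{prop21} asserts that $F:[0,S]\times Q\to\ya$ is continuous, so the composition $s\mapsto F(s,a(s))$ is continuous $[0,S]\to\ya$, and therefore belongs to $\Y=L^2(0,S,\ya)$.

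Next I would prove the pivotal step, sequential weak continuity. Let $\af_n\rightharpoonup\af$ in $\X$. Lemma~\ref{lemw} yields $a_n(s)\rightharpoonup a(s)$ in $\he$ for every $s\in[0,S]$, and the sequential weak continuity of $F(s,\cdot)$ from Proposition~\ref{prop21} then gives the pointwise strong convergence
\[
F(s,a_n(s))\longrightarrow F(s,a(s))\quad\text{in }\ya,\qquad\forall\,s\in[0,S].
\]
To promote this to convergence in $\Y$, I invoke dominated convergence: the compactness part of Proposition~\ref{prop21} implies that $F([0,S]\times Q)$ is relatively compact, hence bounded, in $\ya$, so there is a constant $M$ with $\|F(s,a_n(s))-F(s,a(s))\|_{\ya}\leq 2M$ for all $n$ and all $s$. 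Integrating over $s$ then gives $\uf(\af_n)\to\uf(\af)$ strongly in $\Y$, which is much stronger than the required weak convergence.

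All remaining properties drop out. Norm continuity follows because strong convergence $\af_n\to\af$ in $\X$ implies weak convergence in $\X$, and the argument above then delivers strong convergence in $\Y$. Compactness follows from reflexivity of $\X$: a bounded sequence in $\Q$ admits a weakly convergent subsequence, which the previous paragraph sends to a strongly convergent sequence in $\Y$. Weak closedness is immediate too: if $\af_n\rightharpoonup\af$ in $\Q$ and $\uf(\af_n)\rightharpoonup\mathcal{C}$ in $\Y$, the strong convergence $\uf(\af_n)\to\uf(\af)$ forces $\mathcal{C}=\uf(\af)$, while $\af\in\Q$ because $\Q$ is weakly closed by Proposition~\ref{p2}.

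The delicate step, and the one I would be most careful about, is the passage from pointwise (in $s$) strong convergence to convergence in $\Y$. Without a uniform-in-$s$ bound on $\|F(s,a_n(s))\|_{\ya}$, dominated convergence is unavailable; fortunately this bound is free from the compactness of $F$, since $Q$ is bounded in $\he$ and therefore $[0,S]\times Q$ is mapped into a bounded subset of $\ya$. If this uniform bound were missing, one would have to work harder, for instance by exploiting the Frech\'et equi-differentiability of Proposition~\ref{prop4} to extract a uniform-in-$s$ local Lipschitz estimate.
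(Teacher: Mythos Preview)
Your argument is correct and takes a genuinely different route from the paper's. The paper treats each property separately: continuity via uniform equicontinuity of the family $\{F(s,\cdot):s\in[0,S]\}$ (obtained from Proposition~\ref{prop11}); compactness by a direct construction of weak zero neighbourhoods in $\X$ built from tensor products $\alpha_n\otimes f_{n,m}\in\X^*$ together with the compact embedding $H^l[0,S]\hookrightarrow C([0,S])$ --- essentially re-running the machinery of Lemma~\ref{lemw} at the level of the operator; and weak continuity by the same construction. Your route is more economical: you take Lemma~\ref{lemw} as a black box, feed the resulting pointwise weak convergence into the weak-to-strong behaviour of the scalar operator $F$, and integrate up with dominated convergence; continuity, compactness and weak closedness are then immediate corollaries. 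What you gain is brevity and modularity; what the paper's hands-on construction buys is independence from Lemma~\ref{lemw} for the compactness step.

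One concrete point to repair: you assert that $Q$ is bounded in $\he$, but the definition $Q=\{a\in a_0+\he:a_1\le a\le a_2\}$ imposes only a pointwise constraint, which does not control $\|a\|_{\he}$. The uniform bound you need is nevertheless available from a different source: since $\af_n\rightharpoonup\af$ in $\X$ the sequence is norm-bounded there, and estimate~\eqref{estimate1} of Proposition~\ref{p1} then gives $\sup_{n,s}\|a_n(s)\|_{\he}<\infty$; compactness of $F$ on this bounded subset of $[0,S]\times Q$ furnishes the dominating constant. A minor related imprecision: the pointwise \emph{strong} convergence $F(s,a_n(s))\to F(s,a(s))$ uses both the sequential weak continuity and the compactness of $F$ from Proposition~\ref{prop21}; weak continuity alone yields only weak convergence, which compactness on the bounded set $\{a_n(s)\}_n$ then upgrades to strong.
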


The next result states necessary conditions for the convergence analysis. See \cite{ern,schervar}. Its proof is in the Appendix \ref{app:results}.

\begin{pr}
The operator $\uf(\cdot)$ admits a one sided derivative at $\taf \in \Q$ in the direction $\mathcal{H}$, such that $\taf+\mathcal{H} \in \Q$. The derivative $\uf^\prime(\taf)$ satisfies 
$$
\left\|\mathcal{U}^\prime(\taf)\mathcal{H}\right\|_{\Y} \leq c\|\mathcal{H}\|_{\X}.
$$
Moreover, $\uf^\prime(\taf)$ satisfies the Lipschitz condition
$$
\left\|\uf^\prime(\taf) - \uf^\prime(\taf+\mathcal{H})\right\|_{\mathcal{L}\left(\X,\Y\right)} \leq \gamma\|\mathcal{H}\|_{\X}
$$
for all $\taf,\mathcal{H}\in \Q$ such that $\taf,\taf+\mathcal{H} \in \Q$. \label{prop6}
\end{pr}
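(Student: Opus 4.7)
The plan is to reduce everything to the fibrewise Fréchet theory of Proposition \ref{prop4} and then paste fibres together using the Bochner embeddings from Proposition \ref{p1}. The natural candidate for $\uf^\prime(\taf)$ acting on a direction $\mathcal{H}: s \mapsto h(s)$ with $\taf+\mathcal{H}\in \Q$ is the pointwise assignment
\[
[\uf^\prime(\taf)\mathcal{H}](s) \;:=\; \partial_a F(s,\ta(s))\,h(s), \qquad s\in[0,S],
\]
where $\partial_a F(s,\cdot)$ denotes the Fréchet derivative provided by Proposition \ref{prop4}. From the PDE estimates underlying Propositions \ref{prop21} and \ref{prop4} (cf.\ \cite{eggeng,acpaper}), $\partial_a F(s,\ta)$ is a bounded operator from $\he$ into $\ya$ with a norm $C_0$ independent of $s\in[0,S]$ and $\ta\in Q$. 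Since Proposition \ref{p1} yields the Parseval identity which gives the continuous embedding $\X\hookrightarrow \lb$, one obtains at once
\[
\|\uf^\prime(\taf)\mathcal{H}\|_{\Y}^2 \;=\; \int_0^S\|\partial_a F(s,\ta(s))h(s)\|_{\ya}^2\,ds \;\leq\; C_0^2\|\mathcal{H}\|_{\lb}^2 \;\leq\; c^2\|\mathcal{H}\|_{\X}^2.
\]

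Next, I would verify that $\uf^\prime(\taf)\mathcal{H}$ is indeed the one-sided Gâteaux derivative. Since $\Q$ is convex, $\taf+t\mathcal{H}\in \Q$ for all $t\in[0,1]$, so I can form the difference quotient with remainder
\[
r_t(s) \;:=\; \frac{1}{t}\bigl(F(s,\ta(s)+t\,h(s))-F(s,\ta(s))\bigr) - \partial_a F(s,\ta(s))h(s).
\]
For each fixed $s$, the Fréchet differentiability of $F(s,\cdot)$ at $\ta(s)$ gives $\|r_t(s)\|_{\ya}\to 0$. To pass to the Bochner norm I would invoke dominated convergence: using the Lipschitz dependence of $F(s,\cdot)$ on $a$ in $\ya$ from the proof of Proposition \ref{prop21} together with the uniform boundedness of $\partial_a F(s,\cdot)$, one has $\|r_t(s)\|_{\ya}\leq C\|h(s)\|_{\he}$ for all small $t$, and the right-hand side is square integrable on $[0,S]$ because $\mathcal{H}\in\X\hookrightarrow\lb$ (in fact $\hookrightarrow C(0,S,\he)$ by Proposition \ref{p1}). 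Hence $\|\uf(\taf+t\mathcal{H})-\uf(\taf)-t\uf^\prime(\taf)\mathcal{H}\|_{\Y}/t\to 0$ as $t\to 0^+$, establishing the existence of the one-sided derivative and its continuity in $\mathcal{H}$.

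The Lipschitz estimate then follows fibrewise once one has the Lipschitz property of $\ta\mapsto \partial_a F(s,\ta)$ in operator norm, uniformly in $s$:
\[
\|\partial_a F(s,\ta_1)-\partial_a F(s,\ta_2)\|_{\mathcal{L}(\he,\ya)}\;\leq\; L_0\,\|\ta_1-\ta_2\|_{\he},
\]
which is a standard consequence of the linearization of \eqref{dup2} and does not depend on $s$ beyond the same PDE estimates already used. Granted this, for any test direction $\mathcal{K}:s\mapsto k(s)$ in $\X$,
\[
\|(\uf^\prime(\taf)-\uf^\prime(\taf+\mathcal{H}))\mathcal{K}\|_{\Y}^2 \;\leq\; L_0^2\int_0^S\|h(s)\|_{\he}^2\,\|k(s)\|_{\he}^2\,ds \;\leq\; L_0^2\,\Bigl(\sup_{s\in[0,S]}\|h(s)\|_{\he}\Bigr)^2\,\|\mathcal{K}\|_{\lb}^2.
\]
Applying the embedding \eqref{estimate1} of Proposition \ref{p1} to the sup factor and $\X\hookrightarrow\lb$ to $\mathcal{K}$ gives $\|(\uf^\prime(\taf)-\uf^\prime(\taf+\mathcal{H}))\mathcal{K}\|_{\Y}\leq \gamma\|\mathcal{H}\|_{\X}\|\mathcal{K}\|_{\X}$, which is the claim.

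The main obstacle is the uniform-in-$s$ operator-norm Lipschitz estimate for $\partial_a F$; Proposition \ref{prop4} provides equi-differentiability but not Lipschitz regularity of the derivative itself. I would obtain this by writing the defining equation for $v=\partial_a F(s,\ta_1)h-\partial_a F(s,\ta_2)h$ as a linear parabolic problem with source depending on $(\ta_1-\ta_2)$ and then applying the same $W^{1,2}_2$-estimate used in the proof of Proposition \ref{prop4} (see also \cite{eggeng,acpaper}); crucially, because the coefficients and source are controlled by constants depending only on $a_1,a_2$ and $b$, the estimate is uniform in $s\in[0,S]$, which is exactly what is needed to turn fibrewise Lipschitz into the $\mathcal{L}(\X,\Y)$ bound.
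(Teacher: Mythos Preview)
Your proposal is correct and follows essentially the same route as the paper: define $\uf^\prime(\taf)\mathcal{H}$ fibrewise via $\partial_a F(s,\ta(s))h(s)$, bound it using the uniform $\ya$-estimate for the linearized PDE, and obtain the Lipschitz property by writing the difference $\partial_a F(s,\ta+h)g-\partial_a F(s,\ta)g$ as the solution of a linear parabolic problem with source controlled by $h$ and $g$. The paper's proof is terser---it omits your dominated-convergence verification that the directional derivative actually exists, and it does not spell out that the final step $\int_0^S\|h(s)\|^2\|g(s)\|^2\,ds\leq C\|\mathcal{H}\|_{\X}^2\|\mathcal{G}\|_{\X}^2$ relies on the embedding \eqref{estimate1} to pull one factor out as a supremum---but these are exactly the points you make explicit, so your argument is the same proof with the gaps filled in.
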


The following result is a consequence of the compactness of $\uf(\cdot)$.

\begin{pr}
The Frech\'et derivative of the operator $\uf(\cdot)$ is injective and compact.\label{prop7}
\end{pr}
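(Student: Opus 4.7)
The plan is to identify $\uf'(\taf)$ pointwise in $s$ as $[\uf'(\taf)\mathcal{H}](s) = F'(s,\ta(s))\mathcal{H}(s)$ and then reduce both claims of the proposition to the corresponding properties of the scalar Fr\'echet derivatives $F'(s,\ta(s)):\he\to\ya$. The pointwise formula itself is implicit in the construction used in Proposition~\ref{prop6}: the one-sided difference quotient of $\uf$ linearizes uniformly in $s$ thanks to the Fr\'echet equi-differentiability of the family $\{F(s,\cdot)\}$ established in Proposition~\ref{prop4}. Consequently $\uf'(\taf)$ becomes a pointwise multiplier between Bochner spaces, and both injectivity and compactness can be analyzed fiberwise.

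For injectivity, if $\uf'(\taf)\mathcal{H}=0$ in $\Y$, the Bochner-norm identity $\int_0^S \|F'(s,\ta(s))\mathcal{H}(s)\|_{\ya}^2\,ds = 0$ forces $F'(s,\ta(s))\mathcal{H}(s)=0$ in $\ya$ for a.e.~$s$. I would then invoke the injectivity of each scalar derivative $F'(s,\ta(s))$, a property established in \cite{crepey,acthesis,acpaper,eggeng} that encodes the fact that a non-trivial perturbation of the local variance produces a non-trivial perturbation of the corresponding call price. This gives $\mathcal{H}(s)=0$ in $\he$ for a.e.~$s$; the continuous $\he$-valued representative of $\mathcal{H}$ furnished by Proposition~\ref{p1} then yields $\mathcal{H}\equiv 0$, hence $\mathcal{H}=0$ in $\X$.

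For compactness, the ingredients would be (i) pointwise compactness of $F'(s,\ta(s)):\he\to\ya$, inherited from the parabolic smoothing $W^{1,2}_2\hookrightarrow L^2$ used in the proof of Proposition~\ref{prop21} applied to the linearized Dupire equation, and (ii) the weak-pointwise convergence provided by Lemma~\ref{lemw}. Starting from a bounded sequence $\{\mathcal{H}_n\}\subset\X$, I would pass to a subsequence converging weakly in $\X$ to some $\mathcal{H}_\infty$; Lemma~\ref{lemw} yields $\mathcal{H}_n(s)\rightharpoonup\mathcal{H}_\infty(s)$ in $\he$ for every $s$, and compactness of $F'(s,\ta(s))$ then produces strong convergence $F'(s,\ta(s))\mathcal{H}_n(s)\to F'(s,\ta(s))\mathcal{H}_\infty(s)$ in $\ya$ for each $s$. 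The uniform bound $\sup_s\|\mathcal{H}_n(s)\|_{\he}\le c\|\mathcal{H}_n\|_l$ from \eqref{estimate1}, combined with the operator bound in Proposition~\ref{prop6}, supplies an $s$-independent majorant for the integrand, so dominated convergence delivers $\uf'(\taf)\mathcal{H}_n\to\uf'(\taf)\mathcal{H}_\infty$ in $\Y$.

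The step I expect to be the main obstacle is the rigorous verification of pointwise compactness of $F'(s,\ta(s)):\he\to\ya$ and of the measurability in $s$ needed for the dominated-convergence argument to be legitimate in the Bochner setting. Once those scalar-level facts are in hand, both parts of the proposition follow almost immediately from the pointwise formula.
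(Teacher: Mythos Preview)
Your approach is correct and, for injectivity, essentially the same as the paper's: both reduce to the fiberwise statement that $F'(s,\ta(s))h(s)=0$ forces $h(s)=0$. The only difference is presentational: the paper carries out the fiberwise argument explicitly by observing that if the linearized solution vanishes then the source $h(s)\,(u_{yy}-u_y)$ vanishes, and then identifies $G:=u_{yy}-u_y$ as the Green's function of the forward equation $\partial_\tau G=\tfrac12(\partial_{yy}-\partial_y)(aG+bG)$ with $G|_{\tau=0}=\delta(y)$, whence $G>0$ and $h(s)=0$. You outsource this step to the references; either is fine.

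For compactness the routes genuinely diverge. The paper does not argue fiberwise at all: the sentence preceding the proposition already declares the result ``a consequence of the compactness of $\uf(\cdot)$,'' i.e.\ it invokes the general principle that the Fr\'echet derivative of a compact nonlinear operator is compact (the derivative being a uniform limit of compact difference quotients), and the proof body treats only injectivity. Your argument via Lemma~\ref{lemw}, pointwise compactness of $F'(s,\ta(s))$, and dominated convergence is more hands-on and also works; it has the advantage of being self-contained within the Bochner framework already set up, at the cost of the extra verifications you flag. Note, incidentally, that pointwise compactness of $F'(s,\ta(s)):\he\to\ya$ is most cleanly obtained not from the embedding $\ya\hookrightarrow L^2(D)$ (the target is already $\ya$) but from compactness of $F(s,\cdot)$ itself via the same general principle the paper uses---so in the end your route quietly relies on the paper's shortcut anyway.
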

\begin{proof}
Take $\mathcal{H} \in \ker\left(\uf^\prime(\taf)\right)$. Thus, from the proof of Proposition \ref{prop6}, we have
$
h(s)\cdot (u_{yy} - u_y) = 0.
$ 
However, for each $t$, $G = u_{yy}-u_y$ is the solution of
$$
\left\{\begin{array}{ll}
\partial_\tau G = \displaystyle\frac{1}{2}\left(\partial^2_{yy} - \partial_y\right)\left(a(s)G + bG\right)\\
G\displaystyle\left|_{\tau=0} = \delta(y)\right.,
\end{array}\right.
$$
i.e., $G$ is the Green's function of the Cauchy problem above. Thus, $G > 0$ for every $y$,$\tau > 0$ and $s \in [0,S]$. Therefore $h(t) = 0$. Since this holds for every $s \in [0,S]$, then the result follows.
\end{proof}

We now make use of the bounded embedding of the space
$
\Y 
$
 into the space 
$
L^2(0,S,L^2(D)),
$
since it implies that $\uf$ satisfies the same results presented above with $L^2(0,S,L^2(D))$ instead of $\Y$. 
Thus, we characterize the range of $\uf^\prime(\af)$ as a subset of $L^2(0,S,L^2(D))$ and the range of $\uf^\prime(\af)^*$ 
as a subset of $\X$ in order to proceed in Section~4 the convergence analysis.

\begin{pr}
The operator $\mathcal{U}^\prime(\af^\dagger)^*$ has a trivial kernel.
\label{prf1}
\end{pr}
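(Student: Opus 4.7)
The plan is to reduce to the single-surface case by Hilbert space duality. Recall that in a Hilbert space setting one has $\ker\mathcal{U}'(\mathcal{A}^\dagger)^* = \overline{\text{range}\,\mathcal{U}'(\mathcal{A}^\dagger)}^{\,\perp}$. Consequently, proving triviality of the kernel of the adjoint is equivalent to proving density of the range of $\mathcal{U}'(\mathcal{A}^\dagger)$ in $L^2(0,S,L^2(D))$. Accordingly, I would take an arbitrary $g \in L^2(0,S,L^2(D))$ satisfying
$$
\langle \mathcal{U}'(\mathcal{A}^\dagger)\mathcal{H},\, g\rangle_{L^2(0,S,L^2(D))} = 0 \quad \text{for all admissible } \mathcal{H} \in \mathcal{X},
$$
and show that $g=0$.

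The next step is a separation-of-variables trick. From the proof of Proposition~\ref{prop6} the linearization $\mathcal{U}'(\mathcal{A}^\dagger)$ acts pointwise in $s$: for each $s$, $[\mathcal{U}'(\mathcal{A}^\dagger)\mathcal{H}](s) = F'(s,a^\dagger(s))\,\mathcal{H}(s)$, depending linearly on the fibre $\mathcal{H}(s) \in H^{1+\varepsilon}(D)$. I would then test the orthogonality condition on the tensor-product directions $\mathcal{H}(s,\tau,y) = \phi(s)\,h(\tau,y)$ with $\phi \in H^l[0,S]$ and $h \in H^{1+\varepsilon}(D)$; by linearity these yield
$$
\int_0^S \phi(s)\, \bigl\langle F'(s,a^\dagger(s))h,\; g(s) \bigr\rangle_{L^2(D)}\, ds = 0.
$$
Since $H^l[0,S]$ is dense in $L^2[0,S]$ (as $l>1/2$), varying $\phi$ gives $\langle F'(s,a^\dagger(s))h, g(s)\rangle_{L^2(D)} = 0$ for almost every $s$, for each fixed direction $h$. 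Choosing a countable dense family of $h$'s in $H^{1+\varepsilon}(D)$ and intersecting the null sets, one concludes that, for a.e.\ $s$, the vector $g(s)$ is orthogonal to the range of $F'(s,a^\dagger(s))$ in $L^2(D)$.

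Finally, I would invoke the known single-surface result (see \cite{eggeng,acpaper}): for the classical Dupire forward operator, $F'(s,a^\dagger(s))^*$ has trivial kernel, equivalently, $F'(s,a^\dagger(s))$ has dense range in $L^2(D)$. This is proved by the same kind of Green's function positivity argument used in the proof of Proposition~\ref{prop7}, combined with the strict positivity of the underlying adjoint parabolic problem. Applying this pointwise in $s$, one obtains $g(s) = 0$ for a.e.\ $s$, hence $g \equiv 0$ in $L^2(0,S,L^2(D))$, and the claim follows.

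The main obstacle is the pointwise-in-$s$ reduction: one must verify that tensor-product perturbations are indeed admissible (they lie in $\mathfrak{Q}$ at least after translation by $\mathcal{A}^\dagger$ and rescaling to stay within bounds $a_1,a_2$) and that their linear combinations are dense in the tangent cone at $\mathcal{A}^\dagger$. Fubini-type exchanges, combined with the Bochner space description of $\mathcal{X}$ via its Fourier series given in the preceding propositions, handle the measurability aspects routinely, so the genuine content of the proof is precisely the transfer from density of the linearized range in the single-surface case to the Bochner-valued setting.
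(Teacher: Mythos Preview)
Your approach is correct and reaches the same conclusion, but the paper takes a more direct route. Rather than reducing to the single-surface case via tensor-product test directions and then invoking the known dense-range result for $F'(s,a^\dagger(s))$, the paper explicitly identifies the adjoint inline: writing $\partial_a u(s,a^\dagger(s)) = \mathcal{L}^{-1}\mathcal{G}_{u_{yy}-u_y}$, with $\mathcal{L}$ the parabolic operator of \eqref{dup2} and $\mathcal{G}_{u_{yy}-u_y}$ multiplication by the strictly positive factor $u_{yy}-u_y$, it transfers $[\mathcal{L}^{-1}]^*$ onto the kernel element $z(s)$, producing $g(s)$ as the solution of the adjoint parabolic equation $g_\tau + (ag)_{yy} + (ag)_y = z$. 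The orthogonality condition then reads $\int_0^S \langle (u_{yy}-u_y)\,h(s),\, g(s)\rangle_{L^2(D)}\,ds = 0$ for all $\mathcal{H}\in\X$, and positivity of $u_{yy}-u_y$ together with arbitrariness of $h$ forces $g=0$, hence $z=0$ via the adjoint PDE. Your reduction buys modularity---the single-surface dense-range fact from \cite{eggeng,acpaper} is used as a black box---whereas the paper's computation is self-contained and makes the adjoint structure explicit. Incidentally, your stated obstacle about admissibility of tensor-product perturbations is not a genuine issue: by the proof of Proposition~\ref{prop6} the derivative $\mathcal{U}'(\af^\dagger)$ extends to a bounded linear operator on all of $\X$, so you may test against any $\mathcal{H}$ in the full space without worrying about remaining inside $\Q$.
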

\begin{proof}
For simplicity take $b = 0$. Denote by
$
\mathcal{L} := -\partial_\tau+a(\partial{yy} - \partial_y)
$ 
the parabolic operator of Equation \eqref{dup2} with homogeneous boundary condition and 
$\mathcal{G}_{u_{yy}-u_y}$ the multiplication operator by $u_{yy}-u_y$. Thus, for each 
$s \in [0,S]$, we have $\partial_a u(s,\ta(s)) = \mathcal{L}^{-1}\mathcal{G}_{u_{yy}-u_y}$, 
where $\mathcal{L}^{-1}$ is the left inverse of $\mathcal{L}$ with null boundary conditions. By definition of
$
\uf^\prime(\taf)^*:L^2(0,S,L^2(D)) \rightarrow \X,
$ 
we have,
$$
\left\langle \mathcal{U}^\prime (\taf)\mathcal{H},\mathcal{Z}\right\rangle_{L^2(0,S,L^2(D))} = 
\langle \mathcal{H}, \Phi \rangle_{\X},$$
$\forall ~\mathcal{H} \in \X$ and $\forall ~\mathcal{Z} \in L^2(0,S,L^2(D))$,
with $\Phi = \uf^\prime (\taf)^*\mathcal{Z}$. 
Thus, given any $\mathcal{Z} \in \ker\left(\uf^\prime (\taf)^*\right)$, it follows that
$$
\begin{array}{rcl}
0 &=& \left\langle \uf^\prime (\taf)\mathcal{H},\mathcal{Z}\right\rangle_{L^2(0,S,L^2(D))} 
= \displaystyle\int^S_0\left\langle\mathcal{L}^{-1}\mathcal{G}_{u_{yy}-u_y}h(s),z(s) \right\rangle_{L^2(D)}ds \\
&=& \displaystyle\int^S_0\left\langle \mathcal{G}_{u_{yy}-u_y}h(s),[\mathcal{L}^{-1}]^*z(s)\right \rangle_{L^2\left(D\right)} ds =\displaystyle\int^S_0\left\langle \mathcal{G}_{u_{yy}-u_y}h(s),g(s)\right\rangle_{L^2\left(D\right)}ds, 
\end{array}
$$
where $g$ is a solution of the adjoint equation
$$
g_\tau+(ag)_{yy} + (ag)_y = z
$$
for each $s \in [0,S]$, with homogeneous boundary conditions. Since $z(t) \in L^2(D)$, we have that $g(s) \in \he$ (see \cite{lady}) and $g \in L^2\left(0,S,\he\right)$. 
Since $\mathcal{G}>0$, from the proof of Proposition \ref{prop7} and the fact that $h \in \X$ is arbitrary, it follows that $g = 0$. Therefore $\mathcal{Z} = 0$ almost everywhere in $s \in [0,S]$. It yields that $\ker\left(\mathcal{U}^\prime (a)^*\right) = \{0\}$. \end{proof}

\begin{rem}
From the last proposition it follows that
$$
\ker\{\uf^\prime(\taf)\} = \{0\} \Rightarrow \overline{\mathcal{R}\left\{\left(\uf^\prime(\taf)\right)^*\right\}} = 
\X.
$$
In other words, the range of the adjoint operator of the Frech\'et derivative of the forward operator $\uf$ at $\taf$ is dense in $\X$.
\end{rem}
To finish this section we shall present below the tangential cone condition for $\uf$. It follows almost directly by the above results and Theorem 1.4.2 from \cite{acthesis}. See also \cite{acpaper2}.


\begin{pr}
The map $\uf(\cdot)$ satisfies the local tangential cone condition
\begin{equation}
\left\|\uf(\af) - \uf(\taf) - \uf^\prime(\taf)(\af - \taf)\right\|_{\Y} \leq \gamma \left\|\uf(\af)- \uf(\taf)\right\|_{\Y}
\end{equation}
for all $\af,\taf$ in a ball $B(\af^*,\rho) \subset \Q$ with some $\rho >0$ and $\gamma < 1/2$.
\label{tang}
\end{pr}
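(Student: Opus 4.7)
The strategy is to reduce the Bochner-norm tangential cone inequality to a fiber-wise (pointwise in $s$) estimate for the single-surface forward map $F(s,\cdot)$, for which the tangential cone condition is already established in Theorem~1.4.2 of \cite{acthesis}. Using the definition of $\uf$ together with the identification $[\uf^\prime(\taf)(\af-\taf)](s) = \partial_a F(s,\ta(s))(a(s)-\ta(s))$ obtained in the proof of Proposition~\ref{prop6}, the plan is to write
\begin{equation*}
\|\uf(\af) - \uf(\taf) - \uf^\prime(\taf)(\af-\taf)\|_\Y^2 = \int_0^S \|F(s,a(s)) - F(s,\ta(s)) - \partial_a F(s,\ta(s))(a(s)-\ta(s))\|_{\ya}^2\, ds,
\end{equation*}
together with the analogous expression for $\|\uf(\af)-\uf(\taf)\|_\Y^2$, so that matters reduce to controlling the integrand pointwise by $\gamma$ times $\|F(s,a(s))-F(s,\ta(s))\|_{\ya}$.

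For each fixed $s\in[0,S]$, Theorem~1.4.2 of \cite{acthesis} furnishes exactly such a bound, with some constant $\gamma(s)<1/2$ valid on a ball $B(a^*(s),\rho_s)\subset Q$. The decisive step is to make both $\gamma(s)$ and $\rho_s$ uniform in $s$. Here the Fréchet equi-differentiability of Proposition~\ref{prop4} plays the central role: the constants entering the proof of Theorem~1.4.2 in \cite{acthesis} are expressed through the coefficient bounds $a_1,a_2$, through elliptic and parabolic Schauder estimates on $D$, and through bounds on $u(s,a)$ and its $y$-derivatives, all of which can be taken uniform in $s$ since $s$ enters the Cauchy problem~(\ref{dup2}) only through the bounded initial condition $S_0(1-\text{e}^y)^+$ with $S_0\in[S_{\min},S_{\max}]$. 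Combining this with the continuous embedding $\X\hookrightarrow C(0,S,\he)$ from Proposition~\ref{p1} (inequality~\eqref{estimate1}), a single ball $B(\af^*,\rho)\subset\Q$ of small enough radius $\rho$ automatically sits inside the pointwise ball $B(a^*(s),\rho_s)$ for every $s$, and a single $\gamma<1/2$ works uniformly.

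Once uniformity is secured, the conclusion follows immediately: squaring the pointwise inequality with the uniform constant $\gamma$, integrating over $[0,S]$, and taking square roots yields the claimed estimate in $\Y$. I expect the main technical obstacle to be precisely the uniformity-in-$s$ argument in the second paragraph; it requires inspecting the constants in Theorem~1.4.2 of \cite{acthesis} and verifying that each of its ingredients (positivity of the Green's function as in the proof of Proposition~\ref{prop7}, interior regularity for~(\ref{dup2}), Lipschitz bounds on $\partial_a F(s,\cdot)$, and the linearization error estimate of Proposition~\ref{prop4}) depends only on $a_1,a_2,b,D$ and on the range of $S_0$, rather than on the particular slice~$s$. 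The remaining passage through the Bochner integral is routine.
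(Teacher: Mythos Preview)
Your proposal is correct and follows essentially the same approach as the paper: the paper does not give a detailed proof but simply states that the result ``follows almost directly by the above results and Theorem~1.4.2 from \cite{acthesis},'' and your reduction to the fiber-wise tangential cone condition via the identification $[\uf^\prime(\taf)(\af-\taf)](s) = \partial_a F(s,\ta(s))(a(s)-\ta(s))$, combined with the uniformity arguments based on Propositions~\ref{p1} and~\ref{prop4}, is precisely how one unpacks that remark.
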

As a corollary we have the following result:
\begin{cor}
The operator $\uf$ is injective.
\end{cor}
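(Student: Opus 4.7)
The plan is to combine the two results immediately preceding the corollary: the local tangential cone condition (Proposition \ref{tang}) and the injectivity of the Fr\'echet derivative (Proposition \ref{prop7}). This is the standard recipe for extracting injectivity from a tangential cone estimate in the inverse problems literature, and in the present setting it collapses to a one-line computation.

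Concretely, I would suppose $\af,\taf \in B(\af^*,\rho)\subset \Q$ satisfy $\uf(\af)=\uf(\taf)$. Then the right-hand side of the tangential cone inequality in Proposition \ref{tang} vanishes, so the inequality forces
$$
\|\uf^\prime(\taf)(\af-\taf)\|_{\Y} \leq \gamma\,\|\uf(\af)-\uf(\taf)\|_{\Y}=0,
$$
that is, $\uf^\prime(\taf)(\af-\taf)=0$. By Proposition \ref{prop7}, $\uf^\prime(\taf)$ is injective, so $\af=\taf$. This gives injectivity of $\uf$ on the ball $B(\af^*,\rho)$, which is the sense in which the corollary must be read (the tangential cone condition itself is local, and any $\gamma<1$ would already suffice for this step).

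If one wanted a genuinely global statement on $\Q$, one could instead argue pointwise in $s$: the identity $\uf(\af_1)=\uf(\af_2)$ in $\Y$ means $F(s,a_1(s))=F(s,a_2(s))$ for a.e.\ $s\in[0,S]$, and the single-surface operator $F(s,\cdot)\colon Q\to \ya$ is known to be injective by the tangential cone condition established in \cite{acthesis,acpaper} (via the same two-line argument applied to $F(s,\cdot)$ and its derivative, whose injectivity was already used inside the proof of Proposition \ref{prop7}). I do not expect a real obstacle here; the only thing to watch is that the hypotheses of Proposition \ref{tang} are in force for the pair $(\af,\taf)$ chosen, which is automatic once one restricts attention to the ball around $\af^*$.
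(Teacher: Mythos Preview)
Your argument is correct and is exactly the deduction the paper intends: the corollary is stated immediately after Proposition~\ref{tang} without proof, and the implicit reasoning is precisely the combination of the tangential cone inequality with the injectivity of $\uf^\prime(\taf)$ from Proposition~\ref{prop7} that you spell out. Your remark on the locality of the conclusion (restriction to $B(\af^*,\rho)$) and the alternative pointwise-in-$s$ route via injectivity of $F(s,\cdot)$ are both accurate supplements, though the paper does not elaborate on either.
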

\section{The Inverse Problem}\label{sec:tikho}
Following the notation of Section~3, we want to define a precise and robust way of relating each family of European option price surfaces to the corresponding family of local volatility surfaces, both parameterized by 
the underlying stock price. We first present an analysis of existence and stability of regularized solutions, then we establish some convergence rates. We also prove Morozov's discrepancy principle for the present problem with the same convergence rates.

The inverse problem of local volatility calibration can be restated as:

\noindent{\it Given a family of European call option price surfaces  
$\tuf = \{s \mapsto \tilde{u}(s)\}$ in the space $\Ya$, find the correspondent family of local variance surfaces 
$\af^\dagger = \{s \mapsto a^\dagger(s)\} \in \Q$, satisfying
\begin{equation}
\tuf = \uf(\af^\dagger).
\label{ip1a}
\end{equation}}
In what follows we assume that for a given data $\tuf$, the inverse problem \eqref{ip1a} has always a unique solution $\af^\dagger$ in $\Q$. Such uniqueness follows by the forward operator being injective.
Note that, $\tuf$ is noiseless, i.e., is known without uncertainties. This is an idealized 
situation, thus, to be more realistic, we assume that we can only observe corrupted data $\udd$,
 satisfying a perturbed version of (\ref{ip1a}),
\begin{equation}
\udd = \tuf + \mathcal{E} = \uf(\af^\dagger)+\mathcal{E} 
\label{pi1}
\end{equation}
where $\mathcal{E} = \{s \mapsto E(s)\}$ compiles all the uncertainties associated to 
this problem and $\tuf$ is the unobservable noiseless data.  We assume further that, 
the norm of $\mathcal{E}$ is bounded by the noise level $\delta
 > 0$. Moreover, for each $s \in [0,S]$, we assume that $\|E(s)\| \leq \delta/S$. 
These hypotheses imply that
\begin{equation}
\|\udd - \tuf\|_{\Ya} \leq \delta ~\text{ and }~ 
\|u^\delta(s) - \tilde{u}(s)\|_{L^2(D)} \leq \delta/S \text{ for every } s \in [0,S].
\label{errorb}
\end{equation}
Proposition~\ref{prop22} gives that $\uf(\cdot)$ is compact, implying that the 
associated inverse problem is ill-posed. It means that such inverse problem cannot be solved directly in a 
stable way. Hence, we must apply regularization techniques. This, roughly speaking, 
relies on stating the original problem under a more robust setting.
More specifically, instead of looking for an $\af^\delta \in \Q$ satisfying 
(\ref{pi1}), we shall search for an $\af^\delta \in \Q$
 minimizing the Tikhonov functional
\begin{equation}
\mathcal{F}^{\uf^\delta}_{\af_0,\alpha}(\af) = \|\mathcal{U}^\delta - 
\mathcal{U}(\af)\|^2_{\Ya} + \alpha f_{\af_0}(\af).
 \label{tik1}
\end{equation}

The functional $f_{\af_0}$ has the goal of stabilizing the inverse problem and allows us to  incorporate {\em a priori} information through $\af_0$. 

We shall see later that, the minimizers of \eqref{tik1} are approximations for the solution of (\ref{ip1a}).

In order to guarantee the existence of stable minimizers for the functional \eqref{tik1}, we assume that $f_{\af_0} :\Q \rightarrow [0,\infty]$ is convex, coercive and weakly lower semi-continuous. A classical reference on convex analysis is \cite{ekte}. Note that, these assumptions are not too restrictive, since they are fulfilled by a large class of functionals on $\X$. A canonical example is 
$$f_{\af_0}(\af) = \|\af - \af_0\|^2_{\X},$$
which is leads us to the classical Tikhonov regularization.

Recall that $\uf$ is weakly continuous and $\Q$ is weakly closed. Combining that with the required properties of $f_{\af_0}$ we can apply \cite[Theorem 3.22]{schervar}, which gives for a fixed $\udd \in \Ya$ the existence of at least one element of $\Q$ minimizing $\mathcal{F}^{\uf^\delta}_{\af_0,\alpha}(\cdot)$, 
the functional defined in \eqref{tik1}.

For the sake of completeness, we present the definition of stability of a minimizer:

\begin{df}[Stability]
If $\taf$ is a minimizer of \eqref{tik1} with data $\uf$, then it is called stable if for every sequence $\{\uf_k\}_{k \in \N} \subset \Y$ 
converging strongly to $\uf$, the sequence $\{\af_k\}_{k\in \N} \subset \Q$ of 
minimizers of $\mathcal{F}^{\uf^k}_{\af_0,\alpha}(\cdot)$ has a subsequence 
converging weakly to $\taf$.
\label{dfstab}
\end{df}

Then, by \cite[Theorem 3.23]{schervar}, it follows that the minimizers of \eqref{tik1} are stable in the sense of Definition~\ref{dfstab}.

\noindent By \cite[Theorem 3.26]{schervar}, when the noise level $\delta$ and the regularization parameter $\alpha = \alpha(\delta)$ vanish, we can find a sequence of minimizers of \eqref{tik1} converging weakly to the solution of (\ref{ip1a}). In other words, the minimizers of (\ref{ip1a}) are indeed approximations of the family of true local volatility surfaces. 
In addition, as one interpretation of this theorem, we can say that the smaller the noise level $\delta$ is, if the regularization parameter $\alpha$ is properly chosen, the less dependent on the regularization functional and the {\em a priori} information the Tikhonov minimizers are.

Making use of convex regularization tools, we provide some convergence rates with respect to the noise level. In order to do that, we need some abstract concepts, as the Bregman distance related to $f_{\af_0}$, $q$-coerciveness and the source condition related to operator $\uf$. Such ideas were also used in \cite{crepey,acthesis,acpaper,eggeng}, but here they are extended to the context of online local volatility calibration. For the definitions of Bregman distance and $q$-coerciveness see Appendix~\ref{app:def}.

In what follows we always assume that (\ref{ip1a}) has a (unique) solution which is an element of the Bregman domain $\mathcal{D}_B(f_{\af_0})$.

Before stating the result about convergence rates, we need the following auxiliary lemma, which introduces the so-called source condition. For a review on Convex Regularization, see \cite[Chapter 3]{schervar}.
\begin{lem}
For every $\xi^\dagger \in \partial f_{\af_0}(\af^\dagger)$, there exists $\omega^\dagger \in \Ya$
 and $\mathscr{E} \in \X$ such that 
 $\xi^\dagger = \left[\uf^\prime(\af^\dagger)\right]^*\omega^\dagger + \mathscr{E}$ holds. Moreover, $\mathscr{E}$ can be chosen such that $\|\mathscr{E}\|_{\X}$ is arbitrarily small.
\label{lemmax}
\end{lem}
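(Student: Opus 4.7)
The plan is to read this as a routine density argument: the hypothesis we need is already packaged in the remark just after Proposition~\ref{prf1}, namely that
$$\overline{\mathcal{R}\{[\uf^\prime(\af^\dagger)]^*\}} = \X.$$
So the strategy is simply to approximate $\xi^\dagger$ by elements of the range of the adjoint, and absorb the approximation error into $\mathscr{E}$.

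First I would fix $\xi^\dagger \in \partial f_{\af_0}(\af^\dagger)$ and use the Hilbert space structure of $\X$ (with the inner product introduced in Definition~2, making $\X$ a Hilbert space and hence self-dual via Riesz) to regard $\xi^\dagger$ as an element of $\X$. This identification is what allows us to speak of $\|\mathscr{E}\|_{\X}$ in the conclusion and to compare $\xi^\dagger$ with elements of the range of $[\uf^\prime(\af^\dagger)]^*: \Ya \to \X$.

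Next, fix an arbitrary tolerance $\epsilon > 0$. By Proposition~\ref{prf1} combined with the subsequent remark, the range $\mathcal{R}\{[\uf^\prime(\af^\dagger)]^*\}$ is dense in $\X$. Consequently there exists $\omega^\dagger \in \Ya$ such that
$$\bigl\| \xi^\dagger - [\uf^\prime(\af^\dagger)]^*\omega^\dagger \bigr\|_{\X} < \epsilon.$$
Define
$$\mathscr{E} := \xi^\dagger - [\uf^\prime(\af^\dagger)]^*\omega^\dagger \in \X.$$
Then $\xi^\dagger = [\uf^\prime(\af^\dagger)]^*\omega^\dagger + \mathscr{E}$ by construction, and $\|\mathscr{E}\|_{\X} < \epsilon$. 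Since $\epsilon$ was arbitrary, this proves both claims at once.

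The only mildly subtle point, rather than any genuine obstacle, is the identification of the subdifferential element $\xi^\dagger$ (a priori an element of $\X^*$) with an element of $\X$; this is justified by the fact that $\X$ is Hilbert, and it is implicit in the way the source condition is written in the statement. Everything else is a direct invocation of the density result obtained in the previous subsection, so the proof should be short.
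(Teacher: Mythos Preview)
Your argument is correct and is exactly the approach the paper takes: the paper's justification for Lemma~\ref{lemmax} is simply the one-line observation that it ``follows by $\mathcal{R}(\uf^\prime(\af^\dagger)^*)$ being dense in $\X$,'' citing Proposition~\ref{prf1} and the subsequent remark. You have merely spelled out this density argument explicitly, including the Riesz identification of $\xi^\dagger$ with an element of $\X$, so there is nothing to add.
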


Lemma~\ref{lemmax} follows by $\mathcal{R}(\uf^\prime(\af^\dagger)^*)$ being dense in 
$\X$. See Proposition~\ref{prf1} in Section~3.
Observe also that, we identify $\Ya^*$ and $\X^*$ with $\Ya$ and $\X$, respectively, 
since they are Hilbert spaces.

\begin{theo}[Convergence Rates]
Assume that (\ref{ip1a}) has a (unique) solution. Let the map 
$\alpha : (0,\infty) \rightarrow (0,\infty)$ be such that 
$\alpha(\delta) \approx \delta$ as $\delta\searrow 0$. Furthermore, assume that the convex functional $f_{\af_0}(\cdot)$ is also $q$-coercive with constant $\zeta$, with respect to the norm of $\X$. Then under the source condition of Lemma~\ref{lemmax} it follows that
$$
D_{\xi^\dagger}(\af^\delta_\alpha,\af^\dagger) = \mathcal{O}(\delta) ~~~\text{ and }~~~ 
\|\uf(\af^\delta_\alpha) - \udd \| = \mathcal{O}(\delta).
$$
\label{tc1}
\end{theo}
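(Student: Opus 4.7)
The plan is to run the standard convex Tikhonov convergence-rate argument in the Bregman/source-condition framework of \cite{schervar}, making essential use of the tangential cone condition (Proposition~\ref{tang}) in place of a nonlinearity hypothesis on $\uf^\prime$. First I would write down the minimizing inequality for $\af^\delta_\alpha$ tested against $\af^\dagger$ and combine it with \eqref{errorb}:
$$
\|\uf(\af^\delta_\alpha) - \udd\|^2_{\Ya} + \alpha f_{\af_0}(\af^\delta_\alpha) \leq \|\uf(\af^\dagger) - \udd\|^2_{\Ya} + \alpha f_{\af_0}(\af^\dagger) \leq \delta^2 + \alpha f_{\af_0}(\af^\dagger).
$$
Using the identity $D_{\xi^\dagger}(\af^\delta_\alpha,\af^\dagger) = f_{\af_0}(\af^\delta_\alpha) - f_{\af_0}(\af^\dagger) - \langle \xi^\dagger, \af^\delta_\alpha - \af^\dagger\rangle$ with $\xi^\dagger \in \partial f_{\af_0}(\af^\dagger)$, this rewrites as
$$
\|\uf(\af^\delta_\alpha) - \udd\|^2_{\Ya} + \alpha D_{\xi^\dagger}(\af^\delta_\alpha,\af^\dagger) \leq \delta^2 + \alpha \langle \xi^\dagger, \af^\dagger - \af^\delta_\alpha\rangle,
$$
so that the task reduces to controlling this duality pairing.

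Next I would invoke the source condition of Lemma~\ref{lemmax} to split $\xi^\dagger = [\uf^\prime(\af^\dagger)]^*\omega^\dagger + \mathscr{E}$ and estimate each part separately. For the range part I apply the tangential cone condition of Proposition~\ref{tang} together with the triangle inequality to get
$$
\|\uf^\prime(\af^\dagger)(\af^\delta_\alpha - \af^\dagger)\|_{\Ya} \leq (1+\gamma)\|\uf(\af^\delta_\alpha) - \uf(\af^\dagger)\|_{\Ya} \leq (1+\gamma)\bigl(\|\uf(\af^\delta_\alpha) - \udd\|_{\Ya} + \delta\bigr),
$$
so that, after pairing $[\uf^\prime(\af^\dagger)]^*\omega^\dagger$ with $\af^\dagger - \af^\delta_\alpha$, the contribution of this piece is at most $\|\omega^\dagger\|(1+\gamma)(\|\uf(\af^\delta_\alpha) - \udd\|_{\Ya} + \delta)$. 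For the residual piece I would combine Cauchy--Schwarz with the $q$-coercivity of $f_{\af_0}$:
$$
|\langle \mathscr{E}, \af^\dagger - \af^\delta_\alpha\rangle| \leq \|\mathscr{E}\|_{\X}\|\af^\delta_\alpha - \af^\dagger\|_{\X} \leq \|\mathscr{E}\|_{\X}\bigl(\zeta^{-1}D_{\xi^\dagger}(\af^\delta_\alpha,\af^\dagger)\bigr)^{1/q}.
$$

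Putting the two estimates together, I would then apply Young's inequality twice: once to absorb the term $\alpha(1+\gamma)\|\omega^\dagger\|\|\uf(\af^\delta_\alpha) - \udd\|_{\Ya}$ into the quadratic residual on the left-hand side, and once to absorb the term in $D_{\xi^\dagger}(\af^\delta_\alpha,\af^\dagger)^{1/q}$ into $\alpha D_{\xi^\dagger}(\af^\delta_\alpha,\af^\dagger)$ on the left, after choosing $\|\mathscr{E}\|_{\X}$ small enough, which Lemma~\ref{lemmax} permits. With the a priori parameter choice $\alpha(\delta) \sim \delta$ this leaves an inequality of the form
$$
\tfrac{1}{2}\|\uf(\af^\delta_\alpha) - \udd\|^2_{\Ya} + \tfrac{\alpha}{2} D_{\xi^\dagger}(\af^\delta_\alpha,\af^\dagger) \leq C\,\delta^2,
$$
from which both claimed rates $D_{\xi^\dagger}(\af^\delta_\alpha,\af^\dagger) = \mathcal{O}(\delta)$ and $\|\uf(\af^\delta_\alpha) - \udd\|_{\Ya} = \mathcal{O}(\delta)$ follow.

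The main obstacle I anticipate is the careful bookkeeping of constants in the Young-type absorption step: the size of $\|\mathscr{E}\|_{\X}$ has to be chosen after seeing the tangential-cone constant $\gamma$, the coercivity constant $\zeta$, and $\|\omega^\dagger\|$, and for general $q$ the Young exponents must be paired as $(q,q/(q-1))$ rather than $(2,2)$; one also has to track the $\alpha$ factor multiplying the pairing against the $\alpha$ in front of the Bregman term, so that every residual absorbed bound genuinely scales as $\delta^2$ rather than something weaker.
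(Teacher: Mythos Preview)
Your proposal is correct and follows essentially the same route as the paper: minimizing inequality, rewriting via the Bregman distance, splitting $\xi^\dagger$ via the source condition of Lemma~\ref{lemmax}, controlling the range part through the tangential cone condition (Proposition~\ref{tang}), and absorbing the residual $\mathscr{E}$-term via $q$-coercivity and a Young-type inequality. The only cosmetic difference is that the paper treats $q=1$ and $q>1$ as separate cases (completing the square in the first, and in the second imposing $\|\mathscr{E}\|/\zeta = \mathcal{O}(\delta^{1/q})$ before applying Young), whereas you phrase both under a single Young-inequality absorption; the underlying mechanism is the same.
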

\begin{proof}
Let $\af^\dagger$ and $\af^\delta_\alpha$ denote the solution of (\ref{ip1a}) and the minimizer of \eqref{tik1}, respectively. It follows that, 
$
\|\uf(\af^\delta_\alpha) - \udd\|^2 + \alpha f_{\af_0}(\af^\delta_\alpha) \leq \|\uf(\af^\dagger) - \udd\|^2 + \alpha f_{\af_0}(\af^\dagger) \leq \delta^2 + \alpha f_{\af_0}(\af^\dagger).
$ 

\noindent Since, $D_{\xi^\dagger}(\af^\delta_\alpha,\af^\dagger) = f_{\af_0}(\af^\delta_\alpha) - f_{\af_0}(\af^\dagger) - \langle \xi^\dagger , \af^\delta_\alpha - \af^\dagger\rangle$, it follows by Lemma~\ref{lemmax} and the above estimate that, 
$$
\|\uf(\af^\delta_\alpha) - \udd\|^2 + \alpha D_{\xi^\dagger}(\af^\delta_\alpha,\af^\dagger) \leq  
\delta^2 - \alpha(\langle \omega^\dagger , \uf^\prime(\af^\dagger)(\af^\delta_\alpha - \af^\dagger)\rangle + \langle \mathscr{E} , \af^\delta_\alpha - \af^\dagger\rangle).
$$

\noindent By Proposition~\ref{tang}, it follows that
$
|\langle \omega^\dagger , \uf^\prime(\af^\dagger)(\af^\delta_\alpha - \af^\dagger)\rangle|
\leq (1+\gamma)\|\omega^\dagger\|\|\uf(\af^\delta_\alpha) - \uf(\af^\dagger)\| \leq (1+\gamma)\|\omega^\dagger\|(\delta + \|\uf(\af^\delta_\alpha) - \udd\|).
$ 
Thus,
$
\|\uf(\af^\delta_\alpha) - \udd\|^2 + \alpha D_{\xi^\dagger}(\af^\delta_\alpha,\af^\dagger) \leq \delta^2 + \alpha (1+\gamma)\|\omega^\dagger\|(\delta + \|\uf(\af^\delta_\alpha) - \udd\|) + \alpha \|\mathscr{E}\|\cdot\|\af^\delta_\alpha - \af^\dagger\|.
$

\noindent Since $\|\mathscr{E}\|$ is arbitrarily small, it follows that, $(\zeta - \|\mathscr{E}\|)/\zeta > 0$.
Moreover, since $f_{\af_0}$ is $q$-coercive with constant $\zeta$ we divide the estimates in two cases, when $q=1$ and $q>1$. 
For the case $q = 1$, the above inequalities imply that,
$$
(\|\uf(\af^\delta_\alpha) - \udd\| - \alpha(1+\gamma)\|\omega^\dagger\|/2)^2 + \alpha(1 - 1/\zeta\|\mathscr{E}\|)D_{\xi^\dagger}(\af^\delta_\alpha,\af^\dagger) \leq
(\delta + \alpha(1+\gamma)\|\omega^\dagger\|)^2
$$
Hence, the assertions follow. 
For the case $q > 1$, we denote $\beta_1 = \|\mathscr{E}\|/\zeta$ and we have that,
$$
\beta_1(D_{\xi^\dagger}(\af^\delta_\alpha,\af^\dagger))^{1/q} \leq \displaystyle\frac{\beta_1^q}{q} + \frac{1}{q}D_{\xi^\dagger}(\af^\delta_\alpha,\af^\dagger).
$$
Thus, assuming that $\beta_1 = \mathcal{O}(\delta^{1/q})$, we have the estimate:
\begin{multline}
\left(\|\uf(\af^\delta_\alpha) - \udd\| - \alpha\frac{1+\gamma}{2}\|\omega^\dagger\|\right)^2 + \displaystyle\alpha\frac{q - 1}{q}D_{\xi^\dagger}(\af^\delta_\alpha,\af^\dagger) \leq\\
(\delta + \alpha(1+\gamma)\|\omega^\dagger\|)^2 + \alpha\displaystyle\frac{\beta_1^q}{q},
\end{multline}
and the assertions follow.
\end{proof}

Note that the rates obtained in Theorem~\ref{tc1} state that, in some sense, the distance between the true local variance and the Tikhonov solution is of order $\mathcal{O}(\delta)$. This can be seen as a measure of the reliability of Tikhonov minimizers for this specific example.
\vspace{1.5cc}

\section{Morozov's Principle}\label{sec:morozov}
We now establish a relaxed version of Morozov's discrepancy principle for the specific problem under consideration \cite{moro}. This is one of the most reliable ways of finding the regularization parameter $\alpha$ as a function of the data $\udd$ and the noise level $\delta$. Intuitively, the regularized solution should not fit the data more accurately than the noise level. We remark that this statement does not follow immediately because, the parameter now has to be chosen as a function of the noise level $\delta$ and the data $\udd$. Thus, it is necessary to prove that such functional in fact satisfies the required criteria to achieve the desired convergence rates.

From Equation (\ref{errorb}), it follows that any $\af \in \Q$ satisfying 
\begin{equation}
\|\uf(\af) - \udd\| \leq \delta
\end{equation}
could be an approximate solution for (\ref{ip1a}). 
If $\af^\delta_\alpha$ is a minimizer of \eqref{tik1}, then Morozov's discrepancy principle says that the regularization parameter $\alpha$ should be chosen through the condition 
\begin{equation}
\|\uf(\af^\delta_\alpha) - \udd\| = \delta
\label{m_init}
\end{equation}
whenever it is possible. In other words, the regularized solution should not satisfy the data more accurately than up to the noise level.

Since the identity \eqref{m_init} is restrictive, in what follows we combine two strategies. The first one is the relaxed Morozov's discrepancy principle studied in \cite{anram}. The second one is the sequential discrepancy principle studied in \cite{ahm}.

Note that, in the analysis that follows, we also require that if $f_{\af_0}(\af) = 0$ then $\af = \af_0$.

\begin{df}{\cite{anram}}
Let the noise level $\delta > 0$ and the data $\udd$ be fixed. Define the functionals 
\begin{eqnarray}
L:\af \in \Q &\longmapsto & L(\af) = \|\uf(\af) - \udd\|\in\mathbb{R}_+\cup \{+\infty\},\\
H:\af \in \Q &\longmapsto & H(\af) = f_{\af_0}(\af)\in\mathbb{R}_+\cup \{+\infty\},\\
I: \alpha \in \mathbb{R}_+ &\longmapsto & I(\alpha) = \F^{\uf^\delta}_{\af_0,\alpha}(\af^\delta_\alpha)\in\mathbb{R}_+\cup \{+\infty\}.
\label{func_moro}
\end{eqnarray}
We also define the set containing all minimizers of the functional \eqref{tik1} for each fixed $\alpha \in (0,\infty)$ as
$$
M_\alpha: = \left\{\af^\delta_\alpha \in \Q : L(a^\delta_\alpha) \leq L(\af) ,~\forall \af \in \X \right\}.
$$
Note that we have extended $L(\af)$ to be equal to $\|\uf(\af) - \udd\|$ when $\af \in \Q$ and to be equal to $+\infty$ otherwise.
\end{df}
The first strategy above mentioned is defined as follows:
\begin{df}[Morozov Criteria]
For prescribed $1< \tau_1 \leq \tau_2$, choose $\alpha = \alpha(\delta,\udd)$ such that $\alpha>0$ and
\begin{equation}
\tau_1\delta \leq \|\uf(\af^\delta_\alpha) - \udd \| \leq \tau_2\delta
\label{morozov}
\end{equation}
holds for some $\af^\delta_\alpha$ in $M_\alpha$.
\end{df}

If the first is not possible, then we consider the following:

\begin{df}[Sequential Morozov Criteria]
For prescribed $\ttau>1$, $\alpha_0 > 0$ and $0<q<1$, choose $\alpha_n = q^n\alpha_0$ such that the discrepancy
\begin{equation}
\|\uf(\af^\delta_{\alpha_{n}}) - \udd \| \leq \ttau\delta < \|\uf(\af^\delta_{\alpha_{n-1}}) - \udd \|
\label{seqmorozov}
\end{equation}
is satisfied for some $n \in \N$ and some $\af^\delta_{\alpha_{n}} \in M_{\alpha_n}$ and $\af^\delta_{\alpha_{n-1}} \in M_{\alpha_{n-1}}$.
\end{df}


It follows by \cite[Lemma 2.6.1]{tikar} that the functional $H(\cdot)$ is non-increasing and the functionals $L(\cdot)$ and $I(\cdot)$ are non-decreasing with respect to $\alpha \in (0,\infty)$ in the following sense, if $0 < \alpha < \beta$ then we have
$$
\sup_{\af^\delta_\alpha \in M_\alpha}L(\af^\delta_\alpha) \leq \inf_{\af^\delta_\beta \in M_\beta}L(\af^\delta_\beta), \inf_{\af^\delta_\alpha \in M_\alpha}H(\af^\delta_\alpha) \geq \sup_{\af^\delta_\beta \in M_\beta}H(\af^\delta_\beta) \text{  and  } I(\alpha) \leq I(\beta).
$$

By \cite[Lemma 2.6.3]{tikar}, the functional $I(\cdot)$ is continuous and the sets of discontinuities of $L(\cdot)$ and $H(\cdot)$ are at most countable and coincide. If we denote this set by $M$, then $L(\cdot)$ and $H(\cdot)$ are continuous in $(0,\infty) \backslash M$.

Since the set $M_\alpha$ is weakly closed for each $\alpha > 0$, we have the following:

\begin{lem}
For each $\overline{\alpha}>0$, there exist $\af_1,\af_2 \in M_{\overline{\alpha}}$ such that
$$
L(\af_1) = \displaystyle\inf_{\af \in M_{\overline{\alpha}}}L(\af) ~~~\text{and} ~~~ 
L(\af_2) = \displaystyle\sup_{\af \in M_{\overline{\alpha}}}L(\af).
$$
\end{lem}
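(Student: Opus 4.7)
The plan is to exploit the fact that every $\af \in M_{\overline{\alpha}}$ realizes the same minimum value $I(\overline{\alpha})$ of the Tikhonov functional, so that on $M_{\overline{\alpha}}$ one has the pointwise identity
$$
L(\af)^2 + \overline{\alpha}\, f_{\af_0}(\af) \;=\; I(\overline{\alpha}).
$$
This identity trades the supremum of $L$ for the infimum of $f_{\af_0}$ (and vice versa), which is essential because $L$ is only weakly lower semi-continuous on $\Q$, not weakly upper semi-continuous.

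First I would observe that $M_{\overline{\alpha}}$ is bounded in $\X$: by construction $M_{\overline{\alpha}}\subset \Q$ and every element of $\Q$ satisfies the pointwise bounds $a_1 \leq a(s) \leq a_2$, while $\Q \subset a_0 + \X$. Coupled with the coercivity of $f_{\af_0}$ and the uniform bound on the Tikhonov functional value $I(\overline{\alpha})$, this ensures $M_{\overline{\alpha}}$ is bounded in the reflexive Hilbert space $\X$, hence weakly sequentially precompact.

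For the infimum, I would take a sequence $\{\af_n\} \subset M_{\overline{\alpha}}$ with $L(\af_n) \to \inf_{\af \in M_{\overline{\alpha}}} L(\af)$, extract a weakly convergent subsequence $\af_n \rightharpoonup \af_1$, and use that $M_{\overline{\alpha}}$ is weakly closed (as assumed in the excerpt) to conclude $\af_1 \in M_{\overline{\alpha}}$. By Theorem~\ref{prop22} the operator $\uf$ is sequentially weakly continuous into $\Ya$, so $\uf(\af_n) \rightharpoonup \uf(\af_1)$, and weak lower semi-continuity of the $\Ya$-norm gives $L(\af_1) \leq \liminf L(\af_n) = \inf_{M_{\overline{\alpha}}} L$. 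Since the reverse inequality is trivial, $\af_1$ attains the infimum.

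For the supremum I would take a maximizing sequence $\{\af_n\}\subset M_{\overline{\alpha}}$, i.e.\ $L(\af_n)\to \sup_{M_{\overline{\alpha}}}L$. By the identity on $M_{\overline{\alpha}}$, $f_{\af_0}(\af_n) \to [I(\overline{\alpha}) - (\sup_{M_{\overline{\alpha}}}L)^2]/\overline{\alpha}$. Extract a weakly convergent subsequence $\af_n \rightharpoonup \af_2$, which lies in $M_{\overline{\alpha}}$ by weak closedness. Now I apply weak lower semi-continuity of $f_{\af_0}$ (one of the standing hypotheses on the regularizer) to get $f_{\af_0}(\af_2) \leq \liminf f_{\af_0}(\af_n) = [I(\overline{\alpha}) - (\sup_{M_{\overline{\alpha}}}L)^2]/\overline{\alpha}$. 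Using the identity once more, $L(\af_2)^2 = I(\overline{\alpha}) - \overline{\alpha} f_{\af_0}(\af_2) \geq (\sup_{M_{\overline{\alpha}}}L)^2$, and the reverse inequality is immediate, yielding $L(\af_2) = \sup_{M_{\overline{\alpha}}}L$.

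The main obstacle is precisely the attainment of the supremum: since the norm is weakly lower semi-continuous rather than upper, a naive direct argument with a maximizing sequence fails. Overcoming this requires the \emph{common-level} observation above, which converts the problem into a lower-semi-continuity problem for $f_{\af_0}$, where the required property is available by hypothesis.
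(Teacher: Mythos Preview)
Your proof is correct, and the common-level identity $L(\af)^2 + \overline{\alpha}f_{\af_0}(\af) = I(\overline{\alpha})$ on $M_{\overline{\alpha}}$ is a nice device for converting the supremum problem into a lower-semicontinuity problem for $f_{\af_0}$.

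The paper does not spell out a proof; it only records the hint that $M_{\overline{\alpha}}$ is weakly closed. The argument the paper presumably has in mind is shorter than yours, because Theorem~\ref{prop22} gives more than sequential weak continuity of $\uf$: it gives \emph{compactness}, i.e.\ $\af_n \rightharpoonup \af$ in $\X$ implies $\uf(\af_n) \to \uf(\af)$ strongly in $\Y$ (and hence in $\Ya$). Consequently $L(\af) = \|\uf(\af) - \udd\|_{\Ya}$ is weakly sequentially \emph{continuous} on $\Q$, not merely weakly lower semi-continuous. Since $M_{\overline{\alpha}}$ is bounded and weakly closed in the reflexive space $\X$, it is weakly sequentially compact, and a weakly continuous real-valued function attains both its infimum and its supremum there. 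This handles the supremum directly, without the detour through $f_{\af_0}$.

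Your route has the merit of using only weak lower semi-continuity of the residual and of the penalty, so it would survive in settings where the forward operator is weakly continuous but not compact; the paper's route is quicker here because compactness is available.
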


\begin{pr}
Let $1<\tau_1 \leq \tau_2$ be fixed. Suppose that $\|\uf(\af_0) - \udd\| > \tau_2\delta$. Then, we can find $\underline{\alpha},\overline{\alpha}>0$,  such that
$$
L(\af_1) < \tau_1\delta \leq \tau_2 \delta < L(\af_2),
$$
where $\af_1 :=  \af^\delta_{\underline{\alpha}}$ and $\af_2 := \af^\delta_{\overline{\alpha}}$.
\label{pr7}
\end{pr}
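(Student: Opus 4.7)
The plan is to produce $\underline{\alpha}$ and $\overline{\alpha}$ by exploiting the two extreme asymptotic regimes of the Tikhonov functional: $\alpha\downarrow 0$, which forces the discrepancy to be small, and $\alpha\to\infty$, which drags the minimizer back to the prior $\af_0$ so that the discrepancy approaches $\|\uf(\af_0)-\udd\|$.

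For $\underline{\alpha}$, I would test the minimality of any $\af^\delta_{\underline{\alpha}}\in M_{\underline{\alpha}}$ against the true solution $\af^\dagger\in\Q$. Using $\uf(\af^\dagger)=\tuf$ and $\|\tuf-\udd\|\le\delta$ from \eqref{errorb},
\begin{equation*}
L(\af^\delta_{\underline{\alpha}})^2+\underline{\alpha}f_{\af_0}(\af^\delta_{\underline{\alpha}})\leq\|\uf(\af^\dagger)-\udd\|^2+\underline{\alpha}f_{\af_0}(\af^\dagger)\leq\delta^2+\underline{\alpha}f_{\af_0}(\af^\dagger).
\end{equation*}
Since $\tau_1>1$, any choice $\underline{\alpha}(f_{\af_0}(\af^\dagger)+1)<(\tau_1^2-1)\delta^2$ gives $L(\af^\delta_{\underline{\alpha}})<\tau_1\delta$ uniformly over $M_{\underline{\alpha}}$, so the $\af_1$ produced by the preceding lemma automatically satisfies the left-hand inequality.

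For $\overline{\alpha}$, the strategy is to prove that $L(\af^\delta_\alpha)\to\|\uf(\af_0)-\udd\|$ as $\alpha\to\infty$ along every selection of minimizers. Comparing $\af^\delta_\alpha$ with $\af_0$ (whose regularization cost vanishes) yields
\begin{equation*}
f_{\af_0}(\af^\delta_\alpha)\leq\|\uf(\af_0)-\udd\|^2/\alpha\to 0.
\end{equation*}
Coercivity of $f_{\af_0}$ makes the family $\{\af^\delta_\alpha\}$ bounded in $\X$; along any sequence $\alpha_n\to\infty$ I would extract a weakly convergent subsequence whose limit $\af^\star$ lies in $\Q$ because $\Q$ is weakly closed (Proposition~\ref{p2}). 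Weak lower semicontinuity gives $f_{\af_0}(\af^\star)=0$, and the standing assumption $f_{\af_0}(\af)=0\Rightarrow\af=\af_0$ forces $\af^\star=\af_0$. Uniqueness of the cluster point upgrades this to weak convergence of the entire family, and then sequential weak continuity of $\uf$ (Theorem~\ref{prop22}) yields $\uf(\af^\delta_\alpha)\to\uf(\af_0)$ in $\Ya$, so $L(\af^\delta_\alpha)\to\|\uf(\af_0)-\udd\|>\tau_2\delta$. Choosing $\overline{\alpha}$ sufficiently large makes $L(\af^\delta_{\overline{\alpha}})>\tau_2\delta$ on all of $M_{\overline{\alpha}}$, and the $\af_2$ supplied by the preceding lemma fulfils the right-hand inequality.

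The delicate step is the large-$\alpha$ limit: the Tikhonov functional may admit multiple minimizers for each $\alpha$, so the convergence $\af^\delta_{\alpha_n}\rightharpoonup\af_0$ must hold for \emph{every} selection $\{\af^\delta_{\alpha_n}\}\in\prod_n M_{\alpha_n}$, not only along a distinguished subsequence. This is precisely where the uniqueness clause $f_{\af_0}(\af)=0\Leftrightarrow\af=\af_0$ explicitly imposed at the start of Section~\ref{sec:morozov} is indispensable, together with the compactness/weak continuity properties of $\uf$ established in Section~\ref{sec:forward}; all other ingredients are routine applications of the minimality inequality for $\af^\delta_\alpha$.
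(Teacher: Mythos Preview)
Your proposal is correct and follows essentially the same route as the paper: compare with $\af^\dagger$ to handle small $\alpha$, and compare with $\af_0$ to handle large $\alpha$, using $f_{\af_0}(\af^\delta_\alpha)\to 0$ together with coercivity, weak lower semicontinuity, and the clause $f_{\af_0}(\af)=0\Rightarrow\af=\af_0$ to force $\af^\delta_\alpha\rightharpoonup\af_0$. One small imprecision: sequential weak continuity of $\uf$ by itself only gives $\uf(\af^\delta_\alpha)\rightharpoonup\uf(\af_0)$, not strong convergence; you should either invoke the compactness part of Theorem~\ref{prop22} (which upgrades weak-to-weak to weak-to-strong) or, as the paper does, simply use weak lower semicontinuity of the norm to get $\|\uf(\af_0)-\udd\|\le\liminf L(\af^\delta_{\alpha_n})$, which already suffices.
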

\begin{proof}
First, let the sequence $\{\alpha_n\}_{n \in \N}$ converge to $0$. Then, we can find a sequence 
$\{\af_n\}_{n \in \N}$ with $\af_n \in M_{\alpha_n}$ for each $n \in \N$. Now, let $\af^\dagger$ be an 
$f_{\af_0}$-minimizing solution of (\ref{pi1}). Hence, it follows that 
$
L(\af_n)^2 \leq I(\alpha_n) \leq \mathcal{F}^{\uf^\delta}_{\af_0,\alpha_n}(\af^\dagger) \leq \delta^2 + \alpha_n f_{\af_0}(\af^\dagger).
$ 
Thus, for a sufficiently large $n\in\N$, $L(\af_n)^2 < (\tau_1\delta)^2$, since $\alpha_n f_{\af_0}(a^\dagger) \rightarrow 0$. Thus, we can set $\underline{\alpha} := \alpha_n$ for this same $n$ .

We now assume that $\alpha_n \rightarrow \infty$. Taking $\af_n$ as before, we have the following estimates 
$
H(\af_n) \leq \displaystyle\frac{1}{\alpha_n}I(\alpha_n) \leq  \displaystyle\frac{1}{\alpha_n}\mathcal{F}^{\uf^\delta}_{\af_0,\alpha_n}(\af_0) =  
\displaystyle\frac{1}{\alpha_n}\|\uf(\af_0) - \udd\| \rightarrow 0$ whenever $n\rightarrow \infty$. 
Thus, $\displaystyle\lim_{n\rightarrow \infty}f_{\af_0}(\af_n) = 0$, which implies that $\{\af_n\}_{n \in \N}$ converges weakly to $\af_0$. 
Then, by the weak continuity of $\uf(\cdot)$ and the lower semi-continuity of the norm, it follows that 
$$
\|\uf(\af_0) - \udd\| \leq \displaystyle\liminf_{n\rightarrow \infty}\|\uf(\af_n) - \udd\|,$$
which shows the existence of $\overline{\alpha}$, such that 
$$L(\af^\delta_{\overline{\alpha}}) > \tau_2\delta.$$
\end{proof}

\begin{rem}
 For prescribed $1<\tau_1\leq \tau_2$, the discrepancy principle \eqref{morozov} always works if we assume that there is no $\alpha > 0$ such that the minimizers $\af_1,\af_2 \in M_{\alpha}$ satisfy
\begin{equation}
\|\uf(\af_1) - \udd\|  < \tau_1\delta \leq \tau_2\delta < \|\uf(\af_2) - \udd\|.
\label{moro_condition} 
\end{equation}
In other words, only one of the inequalities of the discrepancy principle \eqref{morozov} could be violated by the minimizers associated to $\alpha$. A sufficient condition for such assumption is the uniqueness of Tikhonov minimizers which we are not able to prove for this specific case. Thus, we have to introduce the sequential discrepancy principle \eqref{seqmorozov} whenever the condition \eqref{moro_condition} is violated. Note that the discrepancy principle \eqref{morozov} is always preferable since its lower inequality implies that the Tikhonov minimizers satisfying \eqref{morozov} do not reproduce noise. Whereas the same conclusion cannot be achieved with the sequential discrepancy principle \eqref{seqmorozov}. 
See also \cite[Remark~4.7]{schu} for another discussion about the discrepancy principle~\eqref{morozov}.
\end{rem}
Under the condition \eqref{moro_condition} and Proposition~\ref{pr7}, by \cite[Theorem 3.10]{anram} we can always find $\alpha := \alpha(\delta)>0$ and a Tikhonov minimizer $\af^\delta_\alpha \in M_\alpha$, such that both the inequalities of the discrepancy principle \eqref{morozov} are satisfied. Proposition~\ref{pr7} also implies that the sequential discrepancy principle \eqref{seqmorozov} is well posed. See \cite[Lemma~2]{ahm}. For a convergence analysis under the sequential Morozov, see \cite{hm}. 

\begin{theo}
Assume that the inverse problem \eqref{ip1a} has a (unique) solution. If condition \eqref{moro_condition} holds, then the regularizing parameter $\alpha = \alpha(\delta,\udd)$ obtained through Morozov's discrepancy principle (\ref{morozov}) satisfies the limits
$$
\displaystyle\lim_{\delta \rightarrow 0+}\alpha(\delta,\udd) = 0
~~~\text{       and       }~~~
\displaystyle\lim_{\delta \rightarrow 0+}\frac{\delta^2}{\alpha(\delta,\udd)} = 0.
$$
The same limits hold if $\alpha$ is chosen through the sequential discrepancy principle \eqref{seqmorozov}.
\label{tma}
\end{theo}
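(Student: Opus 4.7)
Fix an arbitrary null sequence $\delta_n \searrow 0$ with corresponding noisy data $\udd_n$ satisfying $\|\udd_n - \uf(\af^\dagger)\|\leq \delta_n$, Morozov parameter $\alpha_n := \alpha(\delta_n, \udd_n)$, and chosen Tikhonov minimizer $\af_n := \af^{\delta_n}_{\alpha_n} \in M_{\alpha_n}$ realising \eqref{morozov}. The plan is to first extract $\delta_n^2/\alpha_n \to 0$ from Tikhonov minimality combined with the Morozov inequalities and weak lower semi-continuity, and then to rule out a positive accumulation point of $\{\alpha_n\}$ by chaining monotonicity of the residual with stability of Tikhonov minimizers.

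For the rate, Tikhonov minimality of $\af_n$ tested against $\af^\dagger$ (which satisfies $\|\uf(\af^\dagger) - \udd_n\| \leq \delta_n$) gives $\|\uf(\af_n) - \udd_n\|^2 + \alpha_n f_{\af_0}(\af_n) \leq \delta_n^2 + \alpha_n f_{\af_0}(\af^\dagger)$; combining with the lower bound $\tau_1^2 \delta_n^2 \leq \|\uf(\af_n) - \udd_n\|^2$ from \eqref{morozov} yields
\[
(\tau_1^2 - 1)\delta_n^2 + \alpha_n f_{\af_0}(\af_n) \;\leq\; \alpha_n f_{\af_0}(\af^\dagger).
\]
In particular $f_{\af_0}(\af_n) \leq f_{\af_0}(\af^\dagger)$, so by coercivity of $f_{\af_0}$ the sequence $\{\af_n\}$ is bounded in $\X$. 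The upper bound $\|\uf(\af_n) - \udd_n\| \leq \tau_2 \delta_n$ combined with $\udd_n \to \uf(\af^\dagger)$ strongly forces $\uf(\af_n) \to \uf(\af^\dagger)$ strongly, and since $\uf$ is weakly continuous (Theorem~\ref{prop22}) and injective (the Corollary following Proposition~\ref{tang}), every weak cluster point of $\{\af_n\}$ equals $\af^\dagger$; hence $\af_n \rightharpoonup \af^\dagger$. Weak lsc of $f_{\af_0}$ then sandwiches $\lim f_{\af_0}(\af_n) = f_{\af_0}(\af^\dagger)$, and dividing the displayed inequality by $\alpha_n$ produces $\delta_n^2/\alpha_n \to 0$.

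For $\alpha_n \to 0$, suppose by contradiction that $\alpha_n \geq \alpha_0 > 0$ along a subsequence, and fix any $\alpha' \in (0,\alpha_0)$. The monotonicity of $L$ recalled after \eqref{func_moro} gives, for any selection $\af'_n$ of Tikhonov minimizer at parameter $\alpha'$ with data $\udd_n$,
\[
\|\uf(\af'_n) - \udd_n\| \;\leq\; \inf_{\af \in M_{\alpha_n}} L(\af) \;\leq\; \|\uf(\af_n) - \udd_n\| \;\leq\; \tau_2 \delta_n \to 0,
\]
so the same reasoning as above yields $\af'_n \rightharpoonup \af^\dagger$. On the other hand, the stability theorem quoted after Definition~\ref{dfstab}, applied at the fixed parameter $\alpha'$ with $\udd_n \to \uf(\af^\dagger)$, identifies the weak limit of $\af'_n$ with the Tikhonov minimizer $\af^0_{\alpha'}$ of $\|\uf(\cdot) - \uf(\af^\dagger)\|^2 + \alpha' f_{\af_0}(\cdot)$ on $\Q$. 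Hence $\af^\dagger = \af^0_{\alpha'}$ for every $\alpha' \in (0,\alpha_0)$; first-order optimality at $\af^\dagger$, whose data-fidelity gradient $2[\uf'(\af^\dagger)]^*(\uf(\af^\dagger)-\uf(\af^\dagger))$ vanishes identically (using the Fr\'echet differentiability from Proposition~\ref{prop6}), forces $0 \in \partial f_{\af_0}(\af^\dagger)$, and the standing assumption $f_{\af_0}(\af)=0 \Rightarrow \af=\af_0$ then gives $\af^\dagger = \af_0$, contradicting the nontriviality of the identification. The sequential criterion follows by the same strategy, with the lower Morozov bound replaced by the strict inequality $\|\uf(\af^{\delta_n}_{\alpha_n/q}) - \udd_n\| > \ttau \delta_n$ at the preceding parameter, and by noting that $\alpha_n = q^{n(\delta_n)}\alpha_0$ is bounded below exactly when $n(\delta_n)$ is bounded (cf.~\cite{hm}).

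The genuinely delicate step is this last one: the rate $\delta_n^2/\alpha_n \to 0$ comes in a few lines from Tikhonov minimality and weak lsc, whereas forcing $\alpha_n$ itself to zero requires chaining the monotonicity of the residual with the compactness of $\uf$, the stability of Tikhonov minimizers at fixed parameter, and a first-order optimality argument to rule out the only possible obstruction — that $\af^\dagger$ already be Tikhonov-optimal for exact data at some positive regularization level.
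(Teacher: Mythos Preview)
Your argument follows essentially the same route as the paper's proof, with the two limits established in reverse order and some steps packaged differently (you invoke the stability theorem and first-order optimality where the paper passes to the limit directly and tests against the convex combination $(1-t)\af^\dagger + t\af_0$). The core mechanism is identical: Tikhonov minimality plus the lower Morozov bound gives $(\tau_1^2-1)\delta_n^2/\alpha_n \leq f_{\af_0}(\af^\dagger) - f_{\af_0}(\af_n)\to 0$, and a positive accumulation point of $\alpha_n$ would force $\af^\dagger$ to be a noiseless Tikhonov minimizer, hence $f_{\af_0}(\af^\dagger)=0$.

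Two points deserve tightening. First, your claim ``$0\in\partial f_{\af_0}(\af^\dagger)$'' via first-order optimality is not quite justified as stated, because the minimization is over the constraint set $\Q$, not all of $\X$; if $\af^\dagger$ sits on $\partial\Q$ the unconstrained optimality condition need not hold. The paper sidesteps this by testing only in the single feasible direction $\af_0-\af^\dagger$ (available by convexity of $\Q$), obtaining $\overline\alpha\, t\, f_{\af_0}(\af^\dagger)\leq\|\uf((1-t)\af^\dagger+t\af_0)-\uf(\af^\dagger)\|^2=O(t^2)$ and hence $f_{\af_0}(\af^\dagger)=0$ directly. Second, ``contradicting the nontriviality of the identification'' is too vague: the precise contradiction is with the standing hypothesis $\|\uf(\af_0)-\udd\|>\tau_2\delta$ of Proposition~\ref{pr7} (required for the discrepancy principle to apply at all), since $\af^\dagger=\af_0$ would give $\|\uf(\af_0)-\udd\|\leq\delta<\tau_2\delta$. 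With these two clarifications your proof is complete and matches the paper's.
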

\begin{proof}
Let $\{\delta_n\}_{n\in \N}$ be a sequence such that $\delta_n \downarrow 0$ and let $\tuf$ be the noiseless data. Thus, $\|\tuf - \uf^{\delta_n}\|\leq \delta_n$. In addition, recall that the inverse problem \eqref{ip1a} has a unique solution $\af^\dagger$ and then $\uf(\af^\dagger) = \tuf$. We only prove the case where the choice of the regularization parameter is based on the discrepancy principle \eqref{morozov}. Very similar arguments to the ones that follow show the theorem's claim when the choice is based on the sequential discrepancy principle \eqref{seqmorozov}. See \cite[Theorem 1]{ahm}. Thus, it is straightforward to build diagonal convergent subsequences with elements satisfying one of both strategies, in order to prove the limits above asserted.

Let $\alpha_n := \alpha(\delta_n,\uf^{\delta_n})$ denote the regularizing parameter chosen through \eqref{morozov}. Thus, we denote by $\af_n:=\af^{\delta_n}_{\alpha_n}$ its associated minimizer of \eqref{tik1} with respect to $\delta_n$, $\alpha_n$ and $\uf^{\delta_n}$. This defines the sequence $\{\af_n\}_{n\in\N}$, which is pre-compact by the coerciveness of $f_{\af_0}$. Choose a convergent subsequence, denoting it by $\{\af_k\}_{k\in\N}$ and its weak limit by $\taf$. We shall see that $\taf = \af^\dagger$ and thus the original sequence is bounded and has the unique cluster point $\af^\dagger$.

The weakly lower semi-continuity of $\|\uf(\cdot)-\tuf\|$ and $f_{\af_0}$ implies that $\|\uf(\taf) - \tuf\| \leq \lim_{k\rightarrow\infty}(\tau_2+1)\delta_k = 0$. Thus, $\taf$ is a solution of the inverse problem \eqref{ip1a}, which is unique, then $\taf = \af^\dagger$. 

Since, for each $k$, $\af_k$ is a Tikhonov minimizer satisfying the discrepancy principle \eqref{morozov}, it follows by the weakly lower semi-continuity of $f_{\af_0}$ that
\begin{equation}
f_{\af_0}(\af^\dagger) \leq \displaystyle\liminf_{k\rightarrow\infty}f_{\af_0}(\af_k) \leq 
\displaystyle\limsup_{k\rightarrow\infty}f_{\af_0}(\af_k) \leq f_{\af_0}(\af^\dagger).
\label{moro4}
\end{equation}
In other words, $f_{\af_0}(\af_k)\rightarrow f_{\af_0}(\af^\dagger)$.

We now prove that $\alpha(\delta,\uf^\delta)\rightarrow 0$. Assume that with respect to the sequence of the beginning of the proof, there exist $\overline{\alpha}>0$ and a subsequence $\{\alpha_k\}_{k\in\N}$ such that $\alpha_k \geq \overline{\alpha}$ for every $k \in \N$. Denote also by $\{\af_k\}_{k\in\N}$ a sequence of minimizers of \eqref{tik1} with respect to $\delta_k$, $\alpha_k$ and $\uf^{\delta_k}$. Define further the sequence $\{\oaf_k\}_{k\in\N}$ of minimizers of \eqref{tik1} with respect to $\delta_k$, $\overline{\alpha}$ and $\uf^{\delta_k}$. 
Since $L$ in non-decreasing, by the discrepancy principle \eqref{morozov},
\begin{equation}
\|\uf(\oaf_k) - \uf^{\delta_k}\| \leq \|\uf(\af_k) - \uf^{\delta_k}\| \leq \tau_2\delta_k\rightarrow 0
\label{moro5}
\end{equation}
On the other hand, $\displaystyle\limsup_{k\rightarrow \infty} \overline{\alpha}f_{\af_0}(\oaf_k) \leq \overline{\alpha}f_{\af_0}(\af^\dagger)$. By the coerciveness of $f_{\af_0}$, the sequence has a convergent subsequence, denoted also by $\{\oaf_k\}_{k \in \N}$, with limit $\oaf \in \Q$. Thus, by the estimates (\ref{moro4}) and (\ref{moro5}), the weakly lower semi-continuity of $\|\uf(\cdot) - \tuf\|$ and $f_{\af_0}$, it follows that $\|\uf(\oaf) - \tuf\| =0$ and $f_{\af_0}(\oaf) \leq f_{\af_0}(\af^\dagger)$. 
Since the inverse problem \eqref{ip1a} has a unique solution, $\oaf = \af^\dagger$ and thus 
$ f_{\af_0}(\oaf_k)\rightarrow  f_{\af_0}(\af^\dagger)$. On the other hand, $\oaf$ is a minimizer of \eqref{tik1} with regularization parameter $\overline{\alpha}$ and the noiseless data $\tuf$, since for each $\af \in \Q$, the following estimate hold:
$$
\begin{array}{rcl}
\|\uf(\oaf) - \tuf\|^2 + \overline{\alpha} f_{\af_0}(\oaf)& \leq &
\displaystyle\liminf_{k\rightarrow \infty}\left(\|\uf(\af) - \uf^{\delta_k}\|^2 + \overline{\alpha}f_{\af_0}(\af)\right)\\
& = & \|\uf(\af) - \uf^{\delta_k}\|^2 + \overline{\alpha}f_{\af_0}(\af).
\end{array}
$$
Since $f_{\af_0}$ is convex, it follows that for every $t \in [0,1)$
$$
f_{\af_0}((1-t)\oaf + t\af_0) \leq (1-t)f_{\af_0}(\oaf) + tf_{\af_0}(\af_0) = (1-t)f_{\af_0}(\oaf).
$$
Thus, 
$\overline{\alpha}f_{\af_0}(\oaf) \leq \|\uf((1-t)\oaf + t\af_0) - \tuf\|^2 + \overline{\alpha}(1-t)f_{\af_0}(\oaf)$. This implies that 
$\overline{\alpha}tf_{\af_0}(\oaf) \leq \|\uf((1-t)\oaf + t\af_0) - \tuf\|^2$. Since $\tuf = \uf(\oaf)$, by Proposition~\ref{prop6} with $\mathcal{H} = \af_0 - \af$, 
$\overline{\alpha}f_{\af_0}(\oaf) \leq \displaystyle\lim_{t\rightarrow 0^+}\frac{1}{t}\|\uf((1-t)\oaf + t\af_0) - \tuf\|^2 = 0$. Therefore, $f_{\af_0}(\oaf) = 0$. 
But, by hypothesis, it could only hold if $\oaf = \af_0$, i.e., $\af^\dagger = \af_0$. However, $\|\uf(\af_0) - \uf^\delta\| \geq \tau_2\delta$. 
This is a contradiction. We conclude that $\alpha(\delta,\uf^\delta)\rightarrow 0$ when $\delta \rightarrow 0$.

In order to prove the second limit, consider again the subsequence $\{\af_k\}_{k\in\N}$ converging weakly to $\af^\dagger$, the solution of the inverse problem \eqref{ip1a}, when $\delta_k\downarrow 0$. Thus, since for each $k$ $\af_k$ satisfies the discrepancy principle (\ref{morozov}), it follows that $\tau^2_1\delta^2_k + \alpha_k f_{\af_0}(\af_k) \leq \delta_k^2 + \alpha_k f_{\af_0}(\af^\dagger)$. This implies that 
$(\tau_1^2 - 1)\displaystyle\frac{\delta_k^2}{\alpha_k} \leq f_{\af_0}(\af^\dagger) - f_{\af_0}(\af_k) \rightarrow 0$.
\end{proof}

The following theorem states that, if the regularization parameter $\alpha$ is chosen through the discrepancy principle \eqref{morozov}, we achieve the same convergence rates of the Theorem~\ref{tc1}.

\begin{theo}
Assume that the inverse problem \eqref{ip1a} has a (unique) solution.
Suppose that $\af^\delta_\alpha$ is a minimizer of \eqref{tik1} and $\alpha = \alpha(\delta,\udd)$ is chosen through the discrepancy principle (\ref{morozov}) or the sequential discrepancy principle \eqref{seqmorozov}. Then, by the source condition of Lemma~\ref{lemmax}, we have the estimates
\begin{equation}
\|\uf(\af^\delta_\alpha) - \uf(\af^\dagger)\| = \mathcal{O}(\delta)   ~~~\text{ and }~~~
D_{\xi^\dagger}(\af^\delta_\alpha,\af^\dagger) = \mathcal{O}(\delta),
\label{rates_conv}
\end{equation}
with $\xi^\dagger \in \partial f_{\af_0}(\af^\dagger)$. The estimates are achieved whenever \eqref{morozov} is used. 
 \label{mor:cr}
\end{theo}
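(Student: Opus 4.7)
The first rate falls out directly from the upper side of the discrepancy principle. Writing
\[
\|\uf(\af^\delta_\alpha)-\uf(\af^\dagger)\|
\le \|\uf(\af^\delta_\alpha)-\udd\| + \|\udd-\uf(\af^\dagger)\|
\le \tau_2\delta+\delta = (\tau_2+1)\delta,
\]
gives $\|\uf(\af^\delta_\alpha)-\uf(\af^\dagger)\| = \mathcal{O}(\delta)$. The same bound with $\tau_2$ replaced by $\ttau$ works under the sequential principle \eqref{seqmorozov}, since its left inequality gives exactly the same control.

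The key step is to exploit the \emph{lower} discrepancy $\tau_1\delta \le \|\uf(\af^\delta_\alpha)-\udd\|$, which the minimizing property
\[
\|\uf(\af^\delta_\alpha)-\udd\|^2+\alpha f_{\af_0}(\af^\delta_\alpha)\le \|\uf(\af^\dagger)-\udd\|^2+\alpha f_{\af_0}(\af^\dagger)\le \delta^2+\alpha f_{\af_0}(\af^\dagger)
\]
turns into
\[
\alpha\bigl(f_{\af_0}(\af^\delta_\alpha)-f_{\af_0}(\af^\dagger)\bigr)\le \delta^2-\tau_1^2\delta^2=(1-\tau_1^2)\delta^2\le 0,
\]
so $f_{\af_0}(\af^\delta_\alpha)\le f_{\af_0}(\af^\dagger)$. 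Inserting this into the definition of the Bregman distance,
\[
D_{\xi^\dagger}(\af^\delta_\alpha,\af^\dagger)=f_{\af_0}(\af^\delta_\alpha)-f_{\af_0}(\af^\dagger)-\langle\xi^\dagger,\af^\delta_\alpha-\af^\dagger\rangle
\le -\langle\xi^\dagger,\af^\delta_\alpha-\af^\dagger\rangle,
\]
and using the source condition $\xi^\dagger=[\uf'(\af^\dagger)]^*\omega^\dagger+\mathscr{E}$ from Lemma~\ref{lemmax} splits the right-hand side into
\[
-\langle\omega^\dagger,\uf'(\af^\dagger)(\af^\delta_\alpha-\af^\dagger)\rangle-\langle\mathscr{E},\af^\delta_\alpha-\af^\dagger\rangle.
\]

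The first summand is controlled by the tangential cone condition of Proposition~\ref{tang}, which yields
\[
\|\uf'(\af^\dagger)(\af^\delta_\alpha-\af^\dagger)\|\le(1+\gamma)\|\uf(\af^\delta_\alpha)-\uf(\af^\dagger)\|\le(1+\gamma)(\tau_2+1)\delta,
\]
so Cauchy--Schwarz gives $|\langle\omega^\dagger,\uf'(\af^\dagger)(\af^\delta_\alpha-\af^\dagger)\rangle|=\mathcal{O}(\delta)$. The main obstacle is absorbing the residual term $\langle\mathscr{E},\af^\delta_\alpha-\af^\dagger\rangle$; here I would invoke $q$-coerciveness exactly as in the proof of Theorem~\ref{tc1}. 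For $q=1$ one has $\|\af^\delta_\alpha-\af^\dagger\|\le\zeta^{-1}D_{\xi^\dagger}(\af^\delta_\alpha,\af^\dagger)$, so
\[
\bigl(1-\|\mathscr{E}\|/\zeta\bigr)D_{\xi^\dagger}(\af^\delta_\alpha,\af^\dagger)\le\|\omega^\dagger\|(1+\gamma)(\tau_2+1)\delta,
\]
and choosing $\mathscr{E}$ with $\|\mathscr{E}\|<\zeta$ (permitted by Lemma~\ref{lemmax}) yields the second estimate. For $q>1$, Young's inequality $\|\mathscr{E}\|\|\af^\delta_\alpha-\af^\dagger\|\le q^{-1}\|\mathscr{E}\|^q/\zeta^q+q^{-1}D_{\xi^\dagger}(\af^\delta_\alpha,\af^\dagger)$, combined with $\|\mathscr{E}\|=\mathcal{O}(\delta^{1/q})$, absorbs the bad term on the left and produces the claimed $\mathcal{O}(\delta)$ rate.

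Finally, the argument for the sequential principle \eqref{seqmorozov} is identical after replacing $\tau_2$ by $\ttau$ in the upper estimate; what is lost is the lower bound $\tau_1\delta\le\|\uf(\af^\delta_\alpha)-\udd\|$, but this is only used to conclude $f_{\af_0}(\af^\delta_\alpha)\le f_{\af_0}(\af^\dagger)$, which must then be replaced by the weaker but sufficient inequality $\alpha\bigl(f_{\af_0}(\af^\delta_\alpha)-f_{\af_0}(\af^\dagger)\bigr)\le \delta^2$; this adds only an $\mathcal{O}(\delta^2/\alpha)$ term, which by Theorem~\ref{tma} is $o(1)$ and in fact of order $\delta$ under the geometric choice $\alpha_n=q^n\alpha_0$, as can be read off from the discrepancy inequality at the predecessor index $n-1$. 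The remainder of the derivation is unchanged.
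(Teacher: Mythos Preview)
Your argument is correct and follows essentially the same route as the paper: the upper discrepancy gives the residual rate, the lower discrepancy in \eqref{morozov} yields $f_{\af_0}(\af^\delta_\alpha)\le f_{\af_0}(\af^\dagger)$, and then the source condition together with the tangential cone estimate and $q$-coerciveness closes the Bregman bound. The paper treats only $q=1$ in the proof and relegates $q>1$ to a subsequent remark, whereas you fold both cases into the main argument; for the sequential principle, both you and the paper add the extra $\delta^2/\alpha$ term and handle it via the predecessor index (the paper citing \cite{ahm} for the details), so the level of rigor is comparable.
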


\begin{proof}
Let $\af^\dagger$ be the solution of the inverse problem (\ref{ip1a}). If $\af^\delta_\alpha \in M_\alpha$, then, the first estimate is trivial since $
\|\uf(\af^\delta_\alpha) - \uf(\af^\dagger)\| \leq (\tau_2 + 1)\delta.$

\noindent If condition \eqref{moro_condition} holds, then by the first inequality of the discrepancy principle (\ref{morozov}), $\tau_1\delta^2 +\alpha f_{\af_0}(\af^\delta_\alpha) \leq \delta^2+\alpha f_{\af_0}(\af^\dagger)$, implying that $f_{\af_0}(\af^\delta_\alpha) \leq f_{\af_0}(\af^\dagger)$, since $\tau_1 - 1 > 0$. 
Hence, for every $\xi^\dagger \in \partial f_{\af_0}(\af^\dagger)$ satisfying the source condition of Lemma~\ref{lemmax} and assuming that $f_{\af_0}$ is $1$-coerciveness with constant $\zeta$, we have the estimates:
\begin{equation}
\begin{array}{rcl}
D_{\xi^\dagger}(\af^\delta_\alpha,\af^\dagger) &\leq& 
|\langle \xi^\dagger , \af^\delta_\alpha - \af^\dagger \rangle| = 
|\langle \uf^\prime(\af^\dagger)^*\omega^\dagger + \mathcal{E}, \af^\delta_\alpha - \af^\dagger\rangle|\\
&\leq& 
\|\omega^\dagger\|\|\uf^\prime(\af^\dagger)(\af^\delta_\alpha - \af^\dagger)\| + 
\|\mathcal{E}\|\|\af^\delta_\alpha - \af^\dagger\|\\
&\leq&
(1+\gamma)\|\omega^\dagger\|\|\uf(\af^\delta_\alpha) - \uf(\af^\dagger)\| + 
\displaystyle\frac{1}{\zeta}\|\mathcal{E}\|D_{\xi^\dagger}(\af^\delta_\alpha,\af^\dagger)
\end{array}
\label{mo:eqcr}
\end{equation}
Since $\xi^\dagger$ can be chosen with $\|\mathcal{E}\|$ arbitrarily small, it follows that
$
1 - 1/\zeta\|\mathcal{E}\| > 0
$ 
and then, by \eqref{morozov},
$$
D_{\xi^\dagger}(\af^\delta_\alpha,\af^\dagger) \leq \displaystyle\frac{\zeta}{\zeta - \|\mathcal{E}\|}
(1+\gamma)\|\omega^\dagger\|\|\uf(\af^\delta_\alpha) - \uf(\af^\dagger)\| \leq 
\tau_2 \frac{\zeta}{\zeta - \|\mathcal{E}\|}(1+\gamma)\|\omega^\dagger\|\cdot \delta.
$$
On the other hand, let $\alpha$ be given by the sequential discrepancy principle \eqref{seqmorozov}. Since,
$ \alpha D_{\xi^\dagger}(\af^\delta_\alpha,\af^\dagger) \leq \|\uf(\af^\delta_\alpha)-\udd\|^2 + \alpha D_{\xi^\dagger}(\af^\delta_\alpha,\af^\dagger)$, then,
$$
D_{\xi^\dagger}(\af^\delta_\alpha,\af^\dagger) \leq \displaystyle\frac{\delta^2}{\alpha} + |\langle \xi^\dagger , \af^\delta_\alpha - \af^\dagger \rangle|.
$$
By the previous case, the second term in the right hand side of the above inequality has the order $\mathcal{O}(\delta)$. By Theorem~\ref{tma} the first term also vanishes. Since $\ttau\delta \leq  \|\uf(\af^{\delta}_{\alpha/q}) - \udd\|$, it follows that the first term is of order $\mathcal{O}\left(|f_{\af_0}(\af^\delta_{\alpha/q}) - f_{\af_0}(\af^\dagger)|\right)$ and $|f_{\af_0}(\af^\delta_{\alpha/q}) - f_{\af_0}(\af^\dagger)| \leq |\langle \xi^\dagger , \af^\delta_{\alpha/q} - \af^\dagger \rangle|$. See \cite[Proposition~10]{ahm}.
\end{proof}

As above mentioned, the above rates obtained in terms of Bregman distance state that, in some sense, the distance between the true local variance and the Tikhonov solution is of order $\mathcal{O}(\delta)$. Under a more practical perspective, consider $f_{\af_0}(\af) = \|\af - \af_0\|^2_{\X}$. In this case, it follows that  $\|\af^\delta_\alpha - \af^\dagger\|_{\X} = \mathcal{O}(\delta^{1/2})$. In addition, if $l > 1/2$ in $\X$, it follows by the inequality \eqref{estimate1} that
$$\sup_{s\in [0,S]}\|a^\delta_\alpha(s) - a^\dagger(s)\|_{\he} \leq C \|\af^\delta_\alpha - \af^\dagger\|_{\X}.$$
Thus, the convergence rates also follows uniformly in $s$ and imply the convergence rates obtained in previous works, such as \cite{acpaper, eggeng, crepey}. This can be understood as the online solution is at least as good as the solution obtained in the standard case, i.e., the Tikhonov minimizers with only one price surface.

\begin{rem}
For $f_{\af_0}$ $q$-coercive with $q > 1$, a reasoning as the one used in Equation~\eqref{mo:eqcr}, gives that  
$$\begin{array}{rcl}
D_{\xi^\dagger}(\af^\delta_\alpha,\af^\dagger) 
&\leq& \beta_1 (D_{\xi^\dagger}(\af^\delta_\alpha,\af^\dagger))^{1/q} + \beta_2\|\uf(\af^\delta_\alpha) - \uf(\af^\dagger)\|\\
&\leq& \displaystyle\frac{\beta_1^q}{q} + \frac{1}{q}D_{\xi^\dagger}(\af^\delta_\alpha,\alpha) + \beta_2\|\uf(\af^\delta_\alpha) - \uf(\af^\dagger)\|.
\end{array}$$
Assume further that $\beta_1 = \mathcal{O}(\delta^{\frac{1}{q}})$. Since 
$
\|\uf(\af^\delta_\alpha) - \uf(\af^\dagger)\| = \mathcal{O}(\delta)
$, 
it follows that 
$
\|\af^\delta_\alpha - \af^\dagger\|^q \leq \displaystyle\frac{1}{\zeta}D_{\xi}(\af^\delta_\alpha,\af^\dagger)  = \mathcal{O}(\delta).
$
\end{rem}

\section{Numerical Results}\label{sec:numerics}
We first perform tests with synthetic data for testing accuracy and advantages of the methods. Then, we present some examples with observed market prices.

We note that Problem~\eqref{dup2} is solved by a Crank-Nicolson scheme \cite[Chapter 5]{vvlathesis}. Since we shall use a gradient-based method to solve numerically the minimization of the Tikhonov functional \eqref{tik1}. Let $J^\delta(\af)$ and $\nabla J^\delta(\af)$ denote the quadratic residual and its gradient, respectively. More precisely, the residual is given by 
$J^\delta(\af) : =  \|\uf(\af) - \udd\|^2_{\Ya} = \int^S_0\|F(s,a(s)) - u^\delta(s) \|^2_{L^{2}(D)}ds$ and the gradient is given by
\begin{multline}
\langle \nabla J^\delta(\af),\mathcal{H}\rangle_{\X}  =  2\langle\uf(\af) - \udd ,\uf^\prime(\af)\mathcal{H}\rangle_{\Ya}\\
  =  2\displaystyle\int^S_0\int_D\{[v(u_{yy}-u_y)h(t)](s,a(s))\}(\tau,y)d\tau dyds,
\label{gradj}
\end{multline}
where, for each $s \in [0,S]$, $v$ is the solution of equation,
\begin{equation}
v_\tau + (av)_{yy} + (av)_y +bv_y= u(t,a) - u^\delta(s)
\label{adj}
\end{equation}
with homogeneous boundary condition. Note that, $V = \{V: s \mapsto v(s)\}$ is an element of $\Y$. We also numerically solve Problem (\ref{adj}) by a Crank-Nicolson scheme. See \cite[Chapter 5]{vvlathesis}.

In the following examples we assume that $l=1$ in $\X$ and the regularization functional is
$
f_{\af_0}(\af) = \displaystyle\|\af - \af_0\|^2_{\X}.
$

\subsection{Examples with Synthetic Data}
Consider the following local volatility surface:
$$
a(s,u,x) = \left\{
\begin{array}{ll}
\label{sig}
\displaystyle\frac{2}{5}\left(1 - \frac{2}{5}\text{e}^{-\frac{1}{2}( u - s)} \right)\cos(1.25\,\pi \,x),&(u,x) \in (0,1]\times \left[-\displaystyle\frac{2}{5},\displaystyle\frac{2}{5}\right],\\

\displaystyle\frac{2}{5}, & \text{otherwise.} 
\end{array}
\right.
$$

We generate the data, i.e., evaluate the call prices with the above volatility, in a very fine mesh. Then we add a zero-mean Gaussian noise with standard deviation $\delta = 0.035,\, 0.01$. We interpolate the resulting prices in coarser grids. This avoids a so-called inverse crime \cite{somersalo}.

In the present test, we assume that, $r = 0.03$, $(\tau,y) \in [0,1]\times [-5,5]$. We generate the price data with step sizes $\Delta \tau = 0.002$ and $\Delta y = 0.01$. Then, we solve the inverse problem with the step sizes $\Delta \tau = 0.01, \,0.005$ and $\Delta y = 0.1$.  We also assume that the asset price is given by $s \in [29.5, 32.5]$ with three different step sizes, $\Delta s = 0.25, 0.1, 0.01$.

In what follows, we refer to standard Tikhonov as the case when we consider a single price surface in the Tikhonov regularization. Whereas, we use the terminology ``online'' Tikhonov whenever we use more than one single price surface.

\begin{figure}[ht]
  \begin{center}
    \begin{tabular}{rcl}
      \includegraphics[width=0.30\textwidth]{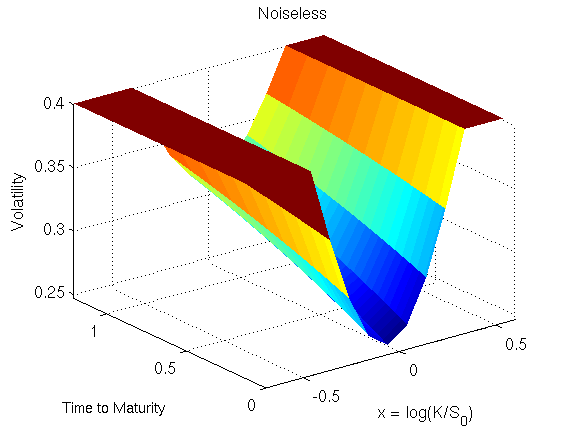}&
      \includegraphics[width=0.30\textwidth]{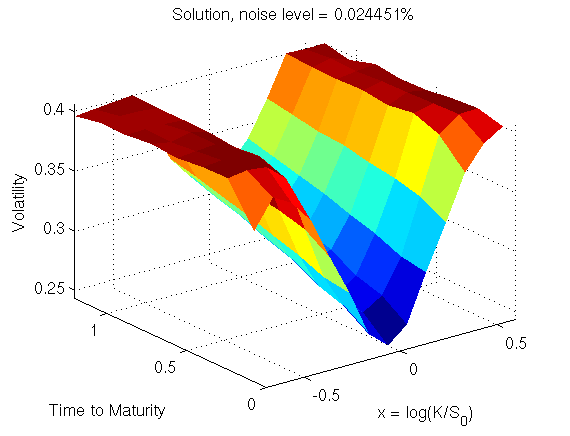}&
      \includegraphics[width=0.30\textwidth]{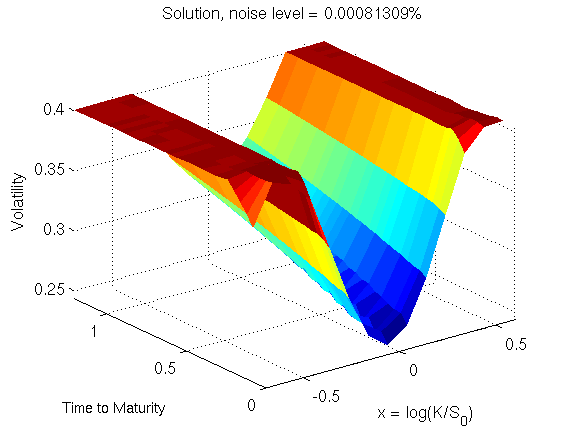}
    \end{tabular}
  \end{center}
  \caption{Left: Original local volatility. Center: Reconstruction with noise level $\delta = 0.035$. Right: Reconstruction with $\delta = 0.01$. When the noise level decreases, the reconstructions become more accurate.}
  \label{test1}
\end{figure}
Figure~\ref{test1} shows reconstructions of the local volatility surface from price data with different noise levels. In addition, we can see that, when the noise level decreases, by refining the accuracy of the data, the resulting reconstructions become more similar to the original local volatility surface. This is an illustration of the Theorems \ref{tc1}, \ref{tma} and \ref{mor:cr}.

\begin{figure}[ht]
  \begin{center}
    \begin{tabular}{rl}
      \includegraphics[width=0.46\textwidth]{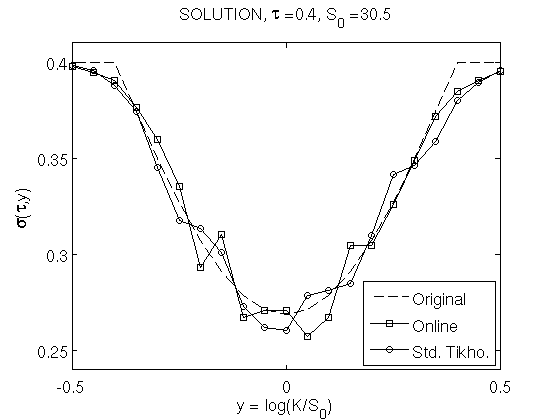}&
      \includegraphics[width=0.46\textwidth]{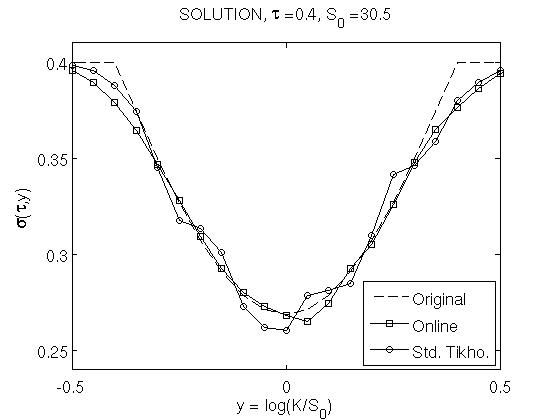}
    \end{tabular}
  \end{center}
  \caption{Comparison between standard and online Tikhonov. As the number of price surfaces increases, the reconstructions become more accurate.}
  \label{test2}
\end{figure}
In Figure~\ref{test2}, we can see that the online Tikhonov presents better solutions than the standard one, as we increase the number of price surfaces in the calibration procedure. Here, the regularization parameter was obtained through the Morozov's discrepancy principle.

\begin{figure}[ht]
  \begin{center}
      \includegraphics[width=0.47\textwidth]{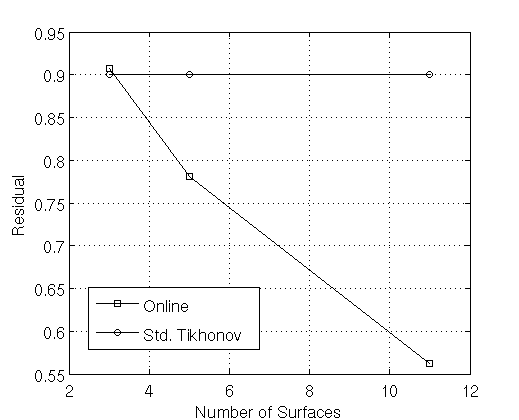}
  \end{center}
  \caption{$L^2$ distance between original local variance and its reconstructions, as a function of the number of price surfaces. it is constant for standard Tikhonov and non-increasing for on line Tikhonov.}
  \label{test3}
\end{figure}
Figure~\ref{test3} shows the evolution of the $L^2(D)$ distance between the reconstructions and the original local variance as a function of the number of surfaces of call prices: it is constant for standard Tikhonov and non-increasing for online Tikhonov. 

\subsection{Examples with Market Data}

We now present some reconstructions of the local volatility by online Tikhonov regularization from market prices. We solve the inverse problem with the step sizes $\Delta \tau = 0.01$ and $\Delta y = 0.1$. The regularizing functional is $f_{\af_0}(\af) = \|\af - \af_0\|^2_{\X}$ and the regularization parameter is chosen through the discrepancy principle \eqref{morozov}. We estimate the noise level as half of the mean of the bid-ask spread in market prices. The market prices are interpolated linearly in the mesh where the inverse problem is solved. In the present example, we consider seven surfaces of call prices in each experiment. The data corresponds to vanilla option prices on futures of Light Sweet Crude Oil (WTI) and Henry Hub natural gas. For a survey on commodity markets, see the book \cite{geman}. For a study of of an application of Dupire's local volatility model on commodity markets, see \cite[Chapter~4]{vvlathesis}.

\begin{figure}[ht]
  \begin{center}
    \begin{tabular}{cc}
      \includegraphics[width=0.46\textwidth]{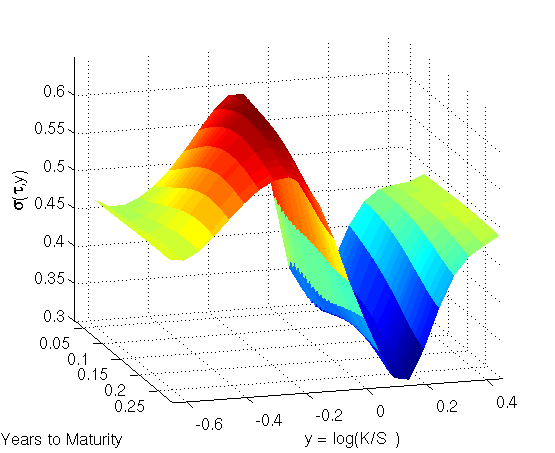}&
      \includegraphics[width=0.46\textwidth]{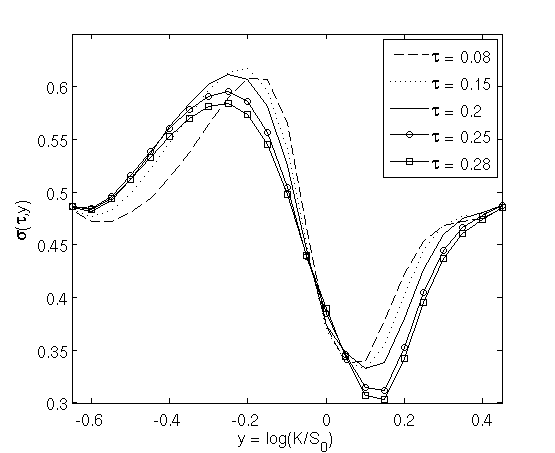}
    \end{tabular}
  \end{center}
  \caption{Local Volatility reconstruction from European vanilla options on futures of WTI oil. We used online Tikhonov regularization with the standard quadratic functional.}
  \label{test4}
\end{figure}

Note that, in order to use the framework developed in the previous sections, we assumed that, the local volatility is indexed by the unobservable spot price, instead of the future price. For more details on such examples, see Chapters 4 and 5 of \cite{vvlathesis}.

\begin{figure}[ht]
  \begin{center}
    \begin{tabular}{cc}
            \includegraphics[width=0.46\textwidth]{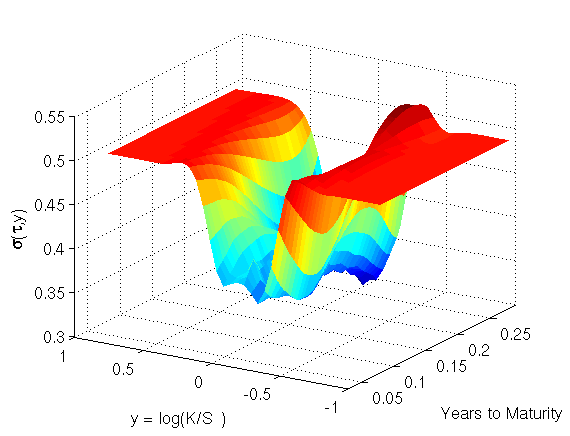}&
      \includegraphics[width=0.46\textwidth]{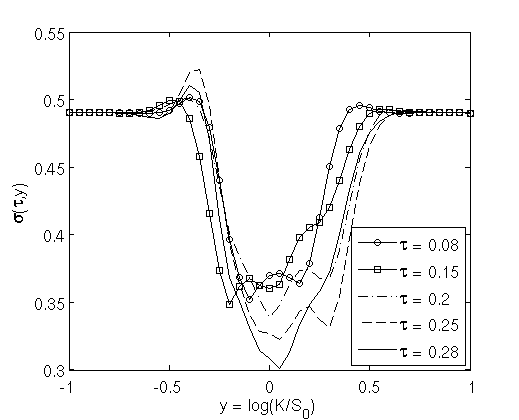}
    \end{tabular}
  \end{center}
  \caption{Local Volatility reconstruction from European vanilla options on futures of Henry Hub natural gas. We used online Tikhonov regularization with the standard quadratic functional.}
  \label{test5}
\end{figure}

Figures~\ref{test4} and~\ref{test5} present the best reconstructions of local volatility for WTI and HH data, respectively. We collected the data prices for Henry Hub natural gas and WTI oil between 2011/11/16 and 2011/11/25, i.e., seven consecutive commercial days.

%
\section{Conclusions}\label{sec:conclusion}

In this paper we have used convex regularization tools to solve the inverse problem associated to  Dupire's local
volatility model when there is a steady flow of data. We first established results concerning existence, stability and convergence of the regularized solutions, making use of convex regularization tools and the regularity of the forward operator. We also proved some convergence rates. Furthermore, we established discrepancy-based choices of the regularization parameter,  under a general framework, following \cite{anram,ahm}. Such analysis allowed us to implement the algorithms and perform numerical tests.

The main contribution, {\em vis a vis} previous works, and in particular~\cite{acpaper}, is that we extended the convex regularization techniques to incorporate the information and data stream that is constantly supplied by the market. Furthermore, we have proved discrepancy-based choices for the regularization parameter that are suitable to this context with regularizing properties.

A natural extension of the current work is the application of these techniques to the context of future markets, where the underlying asset is the future price of some financial instrument or commodity. In such markets, vanilla options represent a key instrument in hedging strategies of companies and in general they are far more liquid than in equity markets. The warning here is that, in general, we do not have an entire price surface. Actually in this case, we only have an option price curve for each future's maturity. Thus, in order to apply the techniques above to this context, it is necessary to assembly all option prices for futures on the same instrument (financial or commodity) in a unique surface in an appropriate way. This was discussed in \cite[Chapter 4]{vvlathesis} and will be published elsewhere.

\section{Acknowledgments}

V.A. acknowledges and thanks CNPq, Petroleo Brasileiro S.A. and Ag\^encia Nacional do Petr\'oleo for the financial support during the preparation of this work.
 J.P.Z. acknowledges and thanks the financial support from CNPq through grants 302161/2003-1 and
474085/2003-1, and from FAPERJ through the programs {\em Cientistas do Nosso Estado} and {\em Pensa Rio}.

\appendix
\section{Proofs, Technical Results and Definitions}
In this appendix we collect technical results and definitions that were used in the remaining parts of the article. We also present the proofs of some results of from Section~3.
\subsection{Bregman Distance and $q$-Coerciveness}\label{app:def}

\begin{df}{\cite[Definition 3.15]{schervar}}
Let $X$ denote a Banach space and $f: D(f) \subset X \rightarrow \R\cup {\infty}$ be a convex functional with sub-differential $\partial f(x)$ in $x \in D(f)$. The Bregman distance (or divergence) of $f$ at $x \in D(f)$ and $\xi \in \partial f(x) \subset X^*$ is defined by
$
 D_{\xi}(\tilde{x},x) = f(\tilde{x}) - f(x) - \langle\xi,\tilde{x} - x\rangle,
$
 for every $\tilde{x} \in X$, with $\langle\cdot,\cdot\cdot\rangle$ the dual product of $X^*$ and $X$. Moreover, the set 
$
\mathcal{D}_B(f) = \{x \in D(f) ~:~ \partial f(x) \not= \emptyset\}
$ 
is called the Bregman domain of $f$.
\end{df}
We stress that the Bregman domain $\mathcal{D}_B(f)$ is dense in $D(f)$ and the interior of $D(f)$ is a subset of $\mathcal{D}_B(f)$. The map $\tilde{x}\mapsto D_{\xi}(\tilde{x},x)$ is convex, non-negative and satisfies $D_{\xi}(x,x) = 0$. In addition, if $f$ is strictly convex, then $D_{\xi}(\tilde{x},x) = 0$ if and only if $\tilde{x} = x$. For a survey in Bregman distances see \cite[Chapter I]{butiusem}.

\begin{df}
 For $1\leq q <\infty$ and $x \in D(f)$, the Bregman distance $D_{\xi}(\cdot,x)$ is said to be $q$-coercive with constant $\zeta>0$ if
$
D_{\xi}(y,x) \geq \zeta \|y-x\|^q_X
$ 
for every $y \in D(f)$.
\end{df}
\subsection{Equicontinuity}
Let $X$ and $Y$ be locally convex spaces. Fix the sets $B_X \subset X$ and $M \subset C(B_X,Y)$. A set $M$ is called equicontinuous on $B_X$ if for every $x_0 \in B_X$ and every zero neighborhood, $V \subset Y$ there is a zero neighborhood $U \subset X$ such that $G(x_0) - G(x) \in V$ for all $G \in M$ and all $x \in B_X$ with $x-x_0 \in U$. Furthermore, $M$ is called uniformly equicontinuous if for every zero neighborhood $V \subset Y$ there exists a zero neighborhood $U \subset X$ such that $G(x) - G(x^\prime) \in V$ for all $G \in M$ and all $x,x^\prime \in B_X$ with $x-x^\prime \in U$.

From \cite{haschele} we have the technical result:
\begin{pr}
Let $F: [0,T]\times B_X \longrightarrow Y$ be a function, and $B_X$, $X$ and $Y$ be as above. If $M_1:= \{F(t,\cdot) : t \in [0,T]\} \subset C(B_X,Y)$, $M_2:=\{F(\cdot,x) : x \in B_X\} \subset C([0,T],Y)$ and $M_1$ (respectively $M_2$) is equicontinuous, then $F$ is continuous. Reciprocally, if $F$ is continuous, then $M_1$ is equicontinuous and if additionally $B_X$ is compact, then $M_2$ is equicontinuous, too.\label{prop11}
\end{pr}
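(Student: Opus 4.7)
The plan is to prove both implications with the same basic mechanism: split $F(t,x)-F(t_0,x_0)$ into a horizontal and a vertical increment, handle each with one of the two continuity hypotheses, and invoke a finite‐subcover argument where uniformity is required.

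For the forward direction, assume $M_1$ is equicontinuous; the case of $M_2$ is symmetric. Fix $(t_0,x_0)\in [0,T]\times B_X$ and a balanced zero neighborhood $V\subset Y$, and pick a zero neighborhood $V_1\subset Y$ with $V_1+V_1\subset V$. I would decompose
$$ F(t,x)-F(t_0,x_0) \;=\; \bigl[F(t,x)-F(t,x_0)\bigr] + \bigl[F(t,x_0)-F(t_0,x_0)\bigr]. $$
Equicontinuity of $M_1$ at $x_0$ furnishes a zero neighborhood $U\subset X$ such that the first bracket lies in $V_1$ for every $t\in[0,T]$ whenever $x-x_0\in U$, while continuity of $F(\cdot,x_0)\in M_2$ at $t_0$ gives a neighborhood $W$ of $t_0$ putting the second bracket in $V_1$. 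Then $W\times(x_0+U)$ is the sought product neighborhood.

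For the converse, assume $F$ is continuous and fix $x_0\in B_X$ and a zero neighborhood $V\subset Y$, with $V_1$ balanced and $V_1+V_1\subset V$. For each $t\in[0,T]$, joint continuity at $(t,x_0)$ produces an open neighborhood $W_t\subset[0,T]$ of $t$ and a zero neighborhood $U_t\subset X$ with $F(s,x)-F(t,x_0)\in V_1$ whenever $s\in W_t$ and $x-x_0\in U_t$. Compactness of $[0,T]$ then extracts a finite subcover $\{W_{t_i}\}_{i=1}^n$; I set $U:=\bigcap_{i=1}^n U_{t_i}$. Given any $t\in[0,T]$ and any $x$ with $x-x_0\in U$, pick $i$ with $t\in W_{t_i}$; both $F(t,x)-F(t_i,x_0)$ and $F(t,x_0)-F(t_i,x_0)$ lie in $V_1$, whence $F(t,x)-F(t,x_0)\in V$. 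This shows $M_1$ is equicontinuous at $x_0$. The equicontinuity of $M_2$ under the additional hypothesis that $B_X$ is compact is obtained by the symmetric argument: fix $t_0\in[0,T]$, cover $B_X$ by finitely many $U_{x_j}$ (now using compactness of $B_X$), intersect the corresponding $W_{x_j}$, and conclude by the same triangle-type comparison through an auxiliary point.

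The main obstacle, in both cases, is the uniformity needed in the quantifier that is being suppressed. The forward direction is essentially a two-term triangle inequality and requires no compactness. The content of the converse lies exactly in the finite-subcover step: compactness of $[0,T]$ is automatic and handles $M_1$, while $M_2$ forces the extra assumption that $B_X$ be compact, because a neighborhood of $t_0$ must be found uniformly as $x$ ranges over $B_X$.
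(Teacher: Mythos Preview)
Your argument is correct. The forward direction is the standard two-term splitting, and in the converse you correctly exploit compactness of the parameter domain to pass from pointwise joint continuity to the required uniformity via a finite subcover; the triangle comparison through the auxiliary base points $t_i$ (resp.\ $x_j$) is exactly what is needed, and your care with balanced $V_1$ and $V_1+V_1\subset V$ handles the subtraction $V_1-V_1\subset V$ cleanly.

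As for comparison with the paper: the paper does not prove this proposition at all. It is stated in the appendix as a technical tool and attributed to the reference \cite{haschele} (Haltmeier, Scherzer, Leit\~ao). So there is no in-paper argument to compare against; your proof supplies what the authors chose to quote.
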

\subsection{Proof of Results from Section~3}\label{app:results}
{\bf Proof of Theorem \ref{prop22}:}
{\it Well Posedness:} Take an arbitrary $\taf \in  \Q$, by the continuity of $\taf$ (see Proposition \ref{p1}) and $F$, it follows that $t \mapsto F(s,\ta(s))$ is continuous and then weakly measurable. Therefore,
$s \mapsto \|F(s,a(s))\|_{\ya}$ is bounded, then $\uf(\taf) \in\Y$, which asserts the well-posedness of $\uf(\cdot)$.

\noindent {\it Continuity:} As $F:[0,S]\times Q \longrightarrow \ya$ is continuous, it follows by Proposition \ref{p1} that the set $\{F(s,\cdot) \left|~ s \in [0,S]\right.\} \subset C(Q,\ya)$ is uniformly equicontinuous, i.e., given $\epsilon > 0$, there is a $\delta > 0$ such that, for all $a,\ta \in Q$ satisfying $\|a - \ta\| < \delta$, we have that 
$\sup_{s \in [0,S]}\|F(s,a)-F(s,\ta)\| < \epsilon.$ 
Thus, given $\epsilon > 0$ and $\af, \taf \in \Q$ such that $\sup_{s \in [0,S]}\|a(s) - \ta(s)\|_{\he}<\delta$, then, by the uniform equicontinuity of $\{F(s,\cdot), s \in [0,S]\}$, it follows that
$$
\displaystyle\|\uf(\af) - \uf(\taf)\|^2_{\Y} = \displaystyle\int^S_0\|F(s,a(s)) - F(s,\ta(s))\|^2_{\ya}ds < \epsilon^2\cdot S,
$$
which asserts the continuity of $\uf(\cdot)$.

\noindent {\it Compactness:} It is sufficient to prove that, given an $\epsilon > 0$ and a sequence $\{\af_n\}_{n \in \N}$in $\Q$ converging weakly to $\taf$, it follows that there exist an $n_0$ and a weak zero neighborhood $U$ of $\X$ such that for $n > n_0$, $\af_n-\taf \in U$ and $\|\uf(\af_n) - \uf(\taf)\|_{\Y}< \epsilon.$

Following the same arguments of the proof of Lemma \ref{lemw}, we can find a set of functionals $\mathcal{C}_{n,m} \in \X^*$, defining such zero neighborhood $U$. We first note that, since $F$ is weak continuous, it follows that, given an $\epsilon >0$, there are $\alpha_1,...,\alpha_N \in \he$ and $\delta > 0$, such that 
$\sup_{s \in [0,S]}\|F(s,a) - F(s,\ta)\| < \epsilon/S$  for all $a,\ta \in B$ with 
\begin{equation}
\max\{|\langle a - \ta, \alpha_n \rangle_{\he} |\,:\, n = 1,...,N\} < \delta.\label{p5:eq1}
\end{equation}
By Proposition \ref{p1}, the estimate $\langle \af,\alpha_n\rangle_{\he} \in H^l[0,S]$ holds with its norm bounded by $\|\af\|_l\|\alpha_n\|_{\he}$. Then, there is a closed and bounded ball 
$A \subset H^l[0,S]$ containing $\langle \af,\alpha_n\rangle_{\he}$, for all $n = 1,...,N,$ 
and $\af \in \mathbb{B}$. 

For $n = 1,...,N$ and the same $\delta > 0$ of \eqref{p5:eq1}, there are $f_{n,1},...,f_{n,M(n)}$  in $H^l[0,S]$ and $\xi_n > 0$ such that, $\|f\|_{C([0,S])} < \delta$ for every $f \in A$ satisfying the estimate 
$\max_{m = 1,...,M(n)}|\langle f,\alpha_n\rangle_{\he}| < \xi_n.$ 
Define $\mathcal{C}_{n,m} : = \alpha_n \otimes f_{n,m}$, with $n = 1,...,N$ and $ m=1,...,M(n)$. It is an element of $\X^*$, where, for each $\af \in \X$, we have that 
$\langle\af, \mathcal{C}_{n,m}\rangle_l = \langle \langle \af,\alpha_n\rangle_{\he}, f_{n,m}\rangle_{H^l[0,S]}$ and thus 
$$\langle\af, \mathcal{C}_{n,m}\rangle_l = \displaystyle\sum_{k\in \Z}(1 + |k|^l)^2\langle \hat{a}(k),\alpha_n\rangle_{\he}\hat{f}_{n,m}(k).
$$
These functionals define a weak zero neighborhood $U := \cap^N_{n=1}U_n$ with
$$
U_n : = \{ \af \in \X : |\langle \af, \mathcal{C}_{n,m}\rangle_l| < \xi_n, ~m=1,...,M(n)\}.
$$
Therefore, if $\{\af_k\}_{k\in\N}\subset \mathbb{B}$ converges weakly to $\taf \in \mathbb{B}$, then for a sufficient large $k$, $\af_k-\taf \in U$ and by the definition of $U$, we have that for each $n = 1,...,N$, 
$\xi_n > |\langle \af-\taf, \mathcal{C}_{n,m}\rangle_l| = |\langle \langle \af-\taf,\alpha_n\rangle_{\he}, f_{n,m} \rangle_{H^l[0,S]}|
$ 
for all $m = 1,...,M(n)$.
By the choice of the $f_{n,m} \in H^l[0,S]$, it follows that 
$\|\langle \af_k-\taf,\alpha_n\rangle_{\he}\|_{H^l[0,S]} < \delta$ for all $n = 1,...,N,$ 
which implies that $\|\uf(\af_k) - \uf(\taf)\|_{\Y} \leq \epsilon\cdot T$.

\noindent {\it Weak Continuity:} The weak continuity follows directly from the proof of compactness, as we use the same framework, only changing the compactness of $F$, by the weakly equicontinuity of $\{F(s,\cdot) : ~s \in [0,S]\}$ on bounded subsets of $Q$.

\noindent {\it Weak Closedness:} Just note that the set $\Q$ is weakly closed and the operator $\uf(\cdot)$ is weakly continuous. \fim

{\bf Proof of Proposition \ref{prop6}}
By Proposition \ref{prop4}, the family of operators $\{F(s,\cdot) \,: \,s \in [0,S]\}$ is Frech\'et equi-differentiable. Take $\taf,\mathcal{H} \in \X$, such that $\taf,\taf+\mathcal{H} \in \Q$. Then, define the one sided derivative of $\uf(\cdot)$ at $\taf$ in the direction $\mathcal{H}$ as 
$\uf^\prime(\taf)\mathcal{H} := \{s \mapsto \partial_a F(s,\ta(s))h(s)\}$, 
where for each $s \in [0,S]$, dropping $t$ to easy the notation, $\partial_a F(s,\ta)h$ is the solution of
$$
-v_\tau + a(v_{yy}-v_y) + bv_y = h(u_{yy}-u_y)
$$
with homogeneous boundary conditions and $u = u(s,a(s))$. From Proposition \ref{prop21} we have the estimate
$\|\partial_a F(s,\ta(s))h(s)\|_{\ya} \leq C\|h(s)\|_{L^2(D)}\|u_{yy}(s,\ta(s))-u_{y}(s,\ta(s))\|_{L^2(D)}$. 
Note that, $\|u_{yy}(s,a)-u_{y}(s,a)\|_{L^2(D)}$ is uniformly bounded in $[0,S]\times Q$. Thus, $\uf^\prime(\taf)\mathcal{H}$ is well defined and
\begin{multline}
\left\| \uf^\prime(\taf)\mathcal{H}\right\|^2_{\Y} = \displaystyle\int^S_0\|\partial_a F(s,\ta(s))h(s)\|^2_{\ya}ds  \\ 	 \leq  C \displaystyle\int^S_0\|h(s)\|_{L^2(D)}\|u_{yy}(s,\ta(s))-u_{y}(s,\ta(s))\|_{L^2(D)}ds\\
	 \leq  c\displaystyle\int^S_0\|h(s)\|^2_{L^2(D)}ds = c\|\mathcal{H}\|^2_{\X}
\end{multline}
Therefore, $\mathcal{U}^\prime(\taf)$ can be extended to a bounded linear operator from the space $\X$ into $\Y$.

Let $\taf,\mathcal{H},\mathcal{G} \in \X$ be such that, $\taf,\taf+\mathcal{H},\taf+\mathcal{G}, \taf+\mathcal{H}+\mathcal{G}$ are in $Q$. 
Define $v:=u(s,a(s)+h(s)) - u(s,a(s))$. Thus, 
$$
w := \partial_a u(s,a(s)+h(s))g(s) - \partial_a u(s,a(s))g(s)
$$
satisfies
$$
-w_\tau + a(w_{yy} - w_y) = -g[v_{yy} - v_{y}] - h[(\partial_a u(s,a+h)g)_{yy} - (\partial_a u(s,a+h)g)_{y}],
$$
with homogeneous boundary conditions (dropping the dependence on $s$). As above, we have
\begin{multline}
\left\|\mathcal{U}^\prime(\taf+\mathcal{H})\mathcal{G} - \mathcal{U}^\prime(\taf) \mathcal{G}\right\|^2_{\Y} = \displaystyle\int^S_0\|w\|^2_{\ya}ds\\
	 \leq c_1\displaystyle\int^S_0\|g(s)\|^2_{L^2(D)}\|v_{yy}(s,\ta(s)) - v_y(s,\ta(s))\|^2_{L^{2}(D)}ds\\
	+ c_2 \displaystyle\int^S_0\|h(s)\|^2_{L^2(D)}\|\partial_a u(s,a(s)+h(s))g(s)\|^2_{\ya}ds \\
	\leq C\|\mathcal{H}\|^2_{\X}\|\mathcal{G}\|^2_{\X},
\end{multline}
which yields the Lipschitz condition. \fim
\bibliographystyle{abbrv}
\addcontentsline{toc}{section}{Bibliography}

\vspace{1cc}

\noindent Instituto Nacional de Matem\'atica Pura e Aplicada\\
Estr. D. Castorina 110, 22460-320. Rio de Janeiro,\\
Brazil.

\noindent E-mail: \href{mailto:vvla@impa.br}{\tt vvla@impa.br} (Vinicius Albani) and \href{mailto:zubelli@impa.br}{\tt zubelli@impa.br} (Jorge Zubelli).

\end{document}